\numberwithin{equation}{section}
\theoremstyle{plain}
\newtheorem{theorem}{Th\'eorème}[section]
\newtheorem{corollary}[theorem]{Corollaire}
\newtheorem{prop}[theorem]{Proposition}
\newtheorem{lemma}[theorem]{Lemme}
\newtheorem*{thmA*}{Th\'eorème A}
\newtheorem*{thmB*}{Th\'eorème B}
\newtheorem*{thmC*}{Th\'eorème C}
\theoremstyle{definition}
\newtheorem*{question}{Question}
\newtheorem{remark}[theorem]{Remarque}
\newtheorem{fact}[theorem]{Fait}
\newtheorem{definition}[theorem]{D\'efinition}
\newcommand{\Q}{\mathbb{Q}}
\newcommand{\cb}{\operatorname{Cb}}
\newcommand{\St}{\operatorname{Stab}}
\newcommand{\acl}{\operatorname{acl}}
\newcommand{\aclq}{\operatorname{acl}^\mathrm{eq}}
\newcommand{\tr}{\operatorname{degtr}}
\newcommand{\ld}{\operatorname{dimlin}}
\newcommand{\sscl}[1]{\langle #1 \rangle}
\newcommand{\inv}[1]{ #1^{-1}}
\newcommand{\dd}{\delta}
\newcommand{\tp}{\operat\begin{normalsize}

\end{normalsize}orname{tp}}
\newcommand{\RM}{\operatorname{RM}}
\def\bdd{\mathrm{bdd}}
\def\tp{\mathrm{tp}}
\def\stp{\mathrm{stp}}
\def\U{\ddot{U}}
\def\a{\overline{a}}
\def\ab{\overline{ab}}
\def\ca{\overline{ca}}
\def\cab{\overline{cab}}
\def\b{\overline{b}}
\def\cc{\overline{c}}
\def\coker{\mathrm{coker}}
\def\sse{\ast}
\def\Ind#1#2{#1\setbox0=\hbox{$#1x$}\kern\wd0\hbox to 0pt{\hss$#1\mid$\hss}
\lower.9\ht0\hbox to 0pt{\hss$#1\smile$\hss}\kern\wd0}
\def\Notind#1#2{#1\setbox0=\hbox{$#1x$}\kern\wd0\hbox to
0pt{\mathchardef\nn="0236\hss$#1\nn$\kern1.4\wd0\hss}\hbox to
0pt{\hss$#1\mid$\hss}\lower.9\ht0
\hbox to 0pt{\hss$#1\smile$\hss}\kern\wd0}
\def\ind{\mathop{\mathpalette\Ind{}}}
\def\indi#1{\mathop{\ \ \hbox to 0pt{\hss$\mid^{\hbox to
0pt{$\scriptstyle#1$\hss}}$\hss}
\lower4pt\hbox to 0pt{\hss$\smile$\hss}\ \ }}
\def\nindi#1{\mathop{\ \ \hbox to 0pt{\hss$\!\not{\mid}^{\hbox to
0pt{$\scriptstyle\,#1$\hss}}$\hss}
\lower4pt\hbox to 0pt{\hss$\smile$\hss}\ \ }}
\begin{document}

\title{\`A la recherche du tore perdu}
\date{\today}

\author{Thomas Blossier, Amador Martin-Pizarro et Frank O. Wagner}
\address{Universit\'e de Lyon; CNRS; Universit\'e Lyon 1; Institut 
Camille Jordan UMR5208, 43 boulevard du 11
novembre 1918, F--69622 Villeurbanne Cedex, France }
\email{blossier@math.univ-lyon1.fr}
\email{pizarro@math.univ-lyon1.fr}
\email{wagner@math.univ-lyon1.fr}
\thanks{Recherche conduite dans le cadre du projet ANR-09-BLAN-0047 MODIG ainsi
que ANR-13-BS01-0006 ValCoMo.}
\keywords{Model Theory, Amalgamation methods, Flatness, Interpretation, Groups}
\subjclass[2010]{03C45}

\begin{abstract} Un groupe interprétable dans le mauvais corps vert
est isogène à un quotient d'un sous-groupe définissable d'un groupe algébrique par 
une puissance du groupe vert. Un sous-groupe définissable d'un groupe
algébrique dans un corps vert ou rouge est une extension des points 
colorés d'un groupe algébrique multiplicatif ou additif
par un groupe algébrique. En particulier, tout groupe simple
définissable dans un corps coloré est algébrique.
\end{abstract}
\maketitle

\section*{Introduction}
La conjecture de l'algébricité des groupes simples infinis
$\aleph_1$-catégoriques énoncée par Cherlin et Zilber affirme qu'un
tel groupe s'interprète comme un
groupe al\-gé\-bri\-que sur un corps algébriquement clos. Cette conjecture
a donné  naissance à une interaction entre la théorie des modèles et
la théorie des groupes.

Un {\em mauvais groupe} serait un contre-exemple de
dimension minimale à la conjecture. La dimension en question est le
{\em rang de Morley}, c'est-à-dire le rang de Cantor-Bendixon de l'espace des
types sur un modèle suffisamment saturé. Un des premiers obstacles
pour la caractérisation algébrique des sous-groupes de Borel
(sous-groupes définissables connexes résolubles maximaux) d'un groupe infini simple
$\aleph_1$-catégorique est l'existence éventuelle d'un {\em mauvais
corps}~: un corps infini de rang de Morley fini muni d'un prédicat pour un
sous-groupe multiplicatif divisible non trivial. Rappelons qu'un corps
infini de rang de Morley fini est algébriquement clos et que, dans le
langage pur des anneaux, le rang de Morley équivaut à la dimension de
Zariski.

L'existence d'un mauvais corps en caractéristique positive est improbable
\cite{Wa03}. En caractéristique nulle, Poizat \cite{Po01} a utilisé la
construction par amalgamation déve\-lop\-pée par Hrushovski \cite{Hr92, Hr93}
pour introduire un corps de rang $\omega\cdot2$ avec un prédicat pour un
sous-groupe multiplicatif de rang $\omega$.
Ce corps a été ensuite collapsé pour obtenir un mauvais corps
\cite{BHPW06}. Dans le même article, Poizat a également construit un corps de
caractéristique positive avec un prédicat pour un sous-groupe additif
infini d'indice infini, collapsé dans \cite{BMPZ05}. Ce travail
faisait suite à sa construction \cite{Po99}  d'un corps avec un
prédicat pour un
sous-ensemble algébriquement indépendant, collapsé dans \cite{BH00}.
Poizat nomme les prédicats
\emph{vert}, \emph{rouge} et \emph{noir} respectivement. Les corps ainsi
obtenus (collapsés ou non) s'appellent donc les corps {\em colorés}.

L'étude systématique des groupes définissables dans les corps colorés
a été entamée dans \cite{BPW09}. Il découle du \cite[Théorème
5.7]{BPW09} que tout groupe définissable
connexe $G$ dans un corps coloré s'envoie par un morphisme
définissable $\phi$  dans un groupe algébrique, tel que la composante
connexe du noyau est incluse dans le centre de $G$. Tout 
groupe simple définissable est ainsi linéaire. Par \cite{Polong},
aucun mauvais groupe n'est définissable dans un corps rouge collapsé
et un mauvais groupe définissable dans un corps vert collapsé
serait un groupe linéaire constitué uniquement d'éléments
semi-simples.

Puisque les corps de rang de Morley fini éliminent
les imaginaires, on peut remplacer {\em définissable} par {\em
interprétable} pour les corps collapsés. Néanmoins, le résultat
précédent ne donne aucune information sur les
groupes abéliens.  En particulier, on ne peut traiter ainsi le
quotient du groupe multiplicatif par le sous groupe coloré. 

Dans la troisième partie de cet article (théorème \ref{T:theoremeA}),
nous donnons une caractérisation des groupes
définissables dans les corps verts collapsés.

\begin{thmA*}
Un groupe interprétable dans un corps vert collapsé est isogène à un
quotient d'un sous-groupe définissable d'un groupe algébrique par un
sous-groupe central qui est lui isogène à une puissance cartésienne du
sous-groupe multiplicatif vert.
\end{thmA*}

On utilise à plusieurs reprises le fait que les sous-groupes
multiplicatifs définis\-sables connexes sont des tores,
qui sont définissables sans paramètres. La preuve de ce
théorème ne s'applique donc pas directement au cas
rouge collapsé, où l'on doit traiter les sous-groupes additifs donnés
par des $p$-polynômes.

Le théorème A nous amène naturellement à étudier, dans les corps
colorés, les sous-groupes définissables d'un groupe algébrique.
Dans le cas du corps noir non-collapsé, Poizat montre que tout
groupe définissable est algébrique
\cite[Proposition 2.4]{Po99}. Ceci reste vrai (voir la conclusion du
même article page 1354) pour les sous-groupes définissables
de groupes algébriques dans le cas collapsé.
Dans la quatrième partie, qui est indépendante du théorème A, nous 
caractérisons les sous-groupes d'un groupe algébrique, définissables
dans un corps vert (théorème \ref{T:Bvert}) ou rouge (théorème
\ref{T:Brouge}), collapsé ou non.

\begin{thmB*}
Dans un corps vert (resp.\ rouge), tout sous-groupe
définissable connexe $G$ d'un groupe algébrique a un sous-groupe normal
algébrique $N$, tel que le quotient $G/N$ est définissablement isomorphe 
(resp.\ définissablement isogène) à une puissance cartésienne du sous-groupe
multiplicatif vert (resp.\ au sous-groupe rouge d'un groupe algébrique
additif).
\end{thmB*}

Ainsi, tout groupe simple définissable dans un corps
coloré est algébrique (corollaire \ref{C:Bsimple}). En particulier,
par élimination des imaginaires, le mauvais corps construit dans
\cite{BHPW06} n'interprète pas de mauvais groupe.

Dans la dernière partie, on étudie les groupes définissables dans une
fusion de deux théories fortement minimales avec la propriété de la
multiplicité définissable (DMP) par amalgame de Hrushovski
\cite{Hr92}. Cette construction a été généralisée \cite{mZ08} au cas
de structures de rang fini avec la DMP. Notons qu'un
corps coloré peut aussi être considéré comme une telle fusion. 
Hrushovski a construit ces fusions pour deux théories fortement minimales  
ayant comme réduit commun l'égalité (cas du corps noir)~; dans le cas
où le réduit commun est un espace vectoriel sur un corps fini (cas du
corps rouge), cette construction a été réalisée dans \cite{BMPZ06}.
Dans le cas où le réduit commun est un espace vectoriel sur $\Q$ (celui du
corps vert), une telle fusion n'a pas encore été construite en
général.

Hrushovski a affirmé que tout groupe définissable dans la fusion
fortement minimale de deux théories fortement minimales sur l'égalité
est isogène à un produit direct de deux groupes, chacun définissable
dans l'une des théories initiales. Il n'en a pas donné de preuve, et
seulement une esquisse se trouve dans \cite[Claim 3.1]{Ha08}. En nous
inspirant de la preuve qu'aucun groupe infini n'est définissable dans
une théorie plate \cite[Section 4.2]{Hr92}, nous donnons (théorème
\ref{T:fusion}) une démonstration complète de ce résultat, valable pour la
fusion libre ou collapsée.

\begin{thmC*}
Dans une fusion sur l'égalité de deux théories fortement minimales
avec la DMP, tout groupe définissable est, modulo un noyau
fini, isomorphe à un produit direct de groupes interprétables dans les
théories de base.
\end{thmC*}

\section{Préliminaires}
Dans cette partie, nous rappelons des résultats portant sur les
groupes stables qui seront utilisés par la suite. Nous renvoyons le lecteur à
\cite{Po87} ou \cite[Chapter 1 Part 6]{Pi96} pour une
introduction aux groupes stables.
\begin{definition} Soit $G$ un groupe stable. \'Etant donnés un
ensemble $A$ de paramètres et un élément $g$ de $G$, le
\emph{stabilisateur} (à gauche) de $g$ dans $G$ sur $A$ est le
sous-groupe défini par
$$\St(g/A) = \{h\in
  G\ :\ \exists\,x\models\stp(g/A)\ (\,hx\models\stp(g/A) \ \land
  x\ind_A h\,)\}.$$
\end{definition}
Notons que $\St(g/A)=\St(g/B)$ si $g\ind_A  B$. De plus, si
$h\in\St(g/A)$
et $g'\models\stp(g/A)$ avec $g'\ind_Ah$, par stationnarité on conclut
que $hg'\models\stp(g/A)$ et $hg'\ind_Ah$.

Le lemme suivant est dû à Ziegler dans le cas abélien 
\cite[Theorem~1]{mZ90}.

\begin{lemma}\label{L:stab} Soit $H$ un groupe type-définissable dans
une théorie stable et $h$, $h'$ dans $H$ tels que $h$, $h'$ et $hh'$ sont
deux-à-deux indépendants sur un ensemble $A$.
Alors les stabilisateurs à gauche satisfont
$$\St(h/A)=\St(hh'/A)=h\St(h')h^{-1}\,;$$
ils sont connexes, et $h$ est générique dans le translaté $\St(h/A)h$, qui
est définissable sur $\aclq(A)$ ; de même pour $h'$ et $hh'$.
\end{lemma}
\begin{proof} 
Nous travaillons sur $\aclq(A)$. En remplaçant le rang de Morley par
chaque rang local stratifié $D$ \cite[Definition 2.1.6 et
Corollary 2.2.5]{Wa97} dans la preuve de \cite[Lemme 2.3]{Po87}, 
il suffit d'abord de vérifier l'inégalité
$D(\St(h))\ge D(h)$. 
Remarquons que $$D(h') = D(h'/h)=D(hh'/h)=D(hh')=D(hh'/h')=D(h/h') = D(h).$$
Soit $h_2\models\tp(h'/hh')$ avec $h_2\ind_{hh'} h$. Puisque $h'\ind
hh',$
on a $h_2\ind hh'$ et donc $h_2\ind h,h'$~; alors $h\ind h'$ donne
$h\ind h',h_2$ et
$$h\ind h'\inv{h_2}.$$
Comme $h',hh'\equiv h_2,hh',$  on a $h',h\equiv
h_2,hh'\inv{h_2}$. En particulier,
$$hh'\inv{h_2} \equiv h.$$
On a donc $h'\inv{h_2}\in\St(h)$. Ainsi
$$\begin{aligned}D(\St(h))&\ge D(h'\inv{h_2})\ge
  D(h'\inv{h_2}/h_2)=D(h'/h_2)=D(h')=D(h).\end{aligned}$$
On voit facilement que
$$\St(hh')=\St(hh'/h')=\St(h/h')=\St(h)$$
et $$\St(hh')=\St(hh'/h)=h\St(h'/h)h^{-1}=h\St(h')h^{-1}.\qedhere$$
\end{proof}

\begin{remark} Ce lemme se généralise au cas où $H$ est
hyperdéfinissable dans une théorie simple, avec la notion de
stabilisateur correspondante aux théories simples sur la clôture
bornée $\bdd(A)$. On obtient que $\St(hh')=\St(h)$ est commensurable
à un conjugué de $\St(h')$.
\end{remark}

\begin{definition}
Soient $G$ et $H$ deux groupes type-définissables. Un sous-groupe
type-définissable $S$ de $G\times H$ est une {\em endogénie} de $G$
dans $H$ si\begin{itemize}
\item la projection sur $G$ est un sous-groupe $G_S$ d'indice borné, et
\item le {\em co-noyau} $\coker(S)=\{h\in H:(1,h)\in S\}$ est fini.\end{itemize}
Une endogénie induit donc un morphisme de $G_S$ dans
$N_H(\coker(S))/\coker(S)$. Une {\em isogénie} de $G$ vers $H$ est une
endogénie de noyau $\ker(S)=\{g\in G:(g,1)\in S\}$ fini et dont
l'image (la projection sur $H$) est d'indice borné dans $H$.
\end{definition}

\begin{lemma}\label{F:iso} Soient $G$ et $H$ deux groupes
type-définissables dans une théorie stable et $C$ un translaté
type-définissable sur $A$ d'un sous-groupe 
$S\le G\times H$. Supposons qu'il existe un générique $(g,h)$ sur $A$ du
translaté $C$ tel que $h\in\acl(A,g)$. Alors $S$ définit une
endogénie de $\pi_1(S)$ dans $H$, où $\pi_1$ dénote la projection
sur $G$. Si de plus $g$ et $h$ sont interalgébriques sur $A$, alors
l'endogénie est une isogénie de $\pi_1(S)$ vers $\pi_2(S)$, où
$\pi_2$ dénote la projection sur $H$.\end{lemma}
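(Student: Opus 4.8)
Posons $A'=\aclq(A)$ et \'ecrivons $C=S\cdot c$ pour un $c=(c_1,c_2)\in C$ ; comme $S$ est connexe, son type g\'en\'erique sur $A'$ est unique. Je fixerais un g\'en\'erique $(g,h)$ de $C$ sur $A$ v\'erifiant $h\in\acl(Ag)$ (hypoth\`ese), puis une extension de $\tp(g,h/A)$ ne d\'eviant pas sur $A$ \`a $A'$, not\'ee $p\in S(A')$ : toute r\'ealisation de $p$ est alors un g\'en\'erique de $C$ sur $A'$, et v\'erifie $h\in\acl(A'g)$ puisque $\acl(Ag)\subseteq\acl(A'g)$.

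\emph{\'Etape 1 : le g\'en\'erique $(u,v)$ de $S$ sur $A'$ v\'erifie $v\in\acl(A'u)$.} Je prendrais $(g_1,h_1)$ et $(g_2,h_2)$, deux r\'ealisations de $p$ ind\'ependantes sur $A'$. Alors $(u,v):=(g_1,h_1)(g_2,h_2)^{-1}=(g_1g_2^{-1},h_1h_2^{-1})$ appartient \`a $S$ et en est un g\'en\'erique sur $A'(g_2,h_2)$, donc sur $A'$. De $g_1=ug_2$ on tire $h_1\in\acl(A'g_1)\subseteq\acl(A'ug_2)$, et comme $h_2\in\acl(A'g_2)$ on obtient $v=h_1h_2^{-1}\in\acl(A'ug_2)$. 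Or $(u,v)\ind_{A'}(g_2,h_2)$ entra\^ine $v\ind_{A'u}g_2$ ; le fait classique qu'en th\'eorie stable $v\ind_Bd$ et $v\in\acl(Bd)$ impliquent $v\in\acl(B)$ donne alors $v\in\acl(A'u)$. Le g\'en\'erique de $S$ sur $A'$ \'etant unique, ceci vaut pour tout tel g\'en\'erique.

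\emph{\'Etape 2 : $\coker(S)$ est fini.} Je raisonnerais par l'absurde. Si $\coker(S)=S\cap(\{1\}\times H)$ \'etait infini, sa composante connexe, qui est normale dans $S$ et type-d\'efinissable sur $A'$, poss\`ederait un g\'en\'erique $w$ sur $A'$ avec $w\notin\acl(A')$. Pour $(u,v)$ g\'en\'erique de $S$ sur $A'w$, l'\'el\'ement $(u,vw)=(u,v)(1,w)$ est encore un g\'en\'erique de $S$ sur $A'w$, donc sur $A'$, car $(1,w)\in\coker(S)\subseteq S$ ; l'\'etape 1 appliqu\'ee \`a $(u,v)$ et \`a $(u,vw)$ donne $v\in\acl(A'u)$ et $vw\in\acl(A'u)$, d'o\`u $w\in\acl(A'u)$. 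Comme $u\ind_{A'}w$, le m\^eme fait classique force $w\in\acl(A')$, une contradiction. Donc $\coker(S)$ est fini ; la projection $\pi_1\colon S\to\pi_1(S)$ \'etant surjective, $S$ d\'efinit une endog\'enie de $\pi_1(S)$ dans $H$.

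\emph{Derni\`ere assertion.} Si de plus $g$ et $h$ sont interalg\'ebriques sur $A$, on applique ce qui pr\'ec\`ede au sous-groupe $S'=\{(h,g):(g,h)\in S\}\le H\times G$, dont $C'=\{(h,g):(g,h)\in C\}=S'\cdot(c_2,c_1)$ est un translat\'e \`a droite, et dont $(h,g)$ est un g\'en\'erique sur $A$ v\'erifiant $g\in\acl(Ah)$ : l'\'etape 2 fournit la finitude de $\coker(S')=\ker(S)$. Comme $\coker(S)$ est d\'ej\`a fini, $S$ est une isog\'enie de $\pi_1(S)$ vers $\pi_2(S)$. Le point d\'elicat sera l'\'etape 1 — pr\'ecis\'ement le passage de $v\in\acl(A'ug_2)$ \`a $v\in\acl(A'u)$, donc le recours soign\'e au fait de stabilit\'e invoqu\'e, et le choix de l'extension $p$ ne d\'eviant pas du bon type g\'en\'erique de $C$.
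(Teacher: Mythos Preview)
Your proof is correct and follows essentially the same route as the paper. Your \'Etape~1 is exactly the paper's computation: take two independent realisations of the generic type of $C$, form their quotient to obtain a generic of $S$, and use independence to drop the extra parameter and conclude $v\in\acl(A'u)$. The only difference is that the paper then deduces finiteness of $\coker(S)$ in one line (``En particulier, le conoyau $\coker(S)$ est fini''), whereas your \'Etape~2 spells out a contradiction argument via a generic of $\coker(S)^0$; this is correct but more elaborate than needed, since once the generic $(u,v)$ of $S$ has $v\in\acl(A'u)$, the fibre of $\pi_1$ above the generic $u$ is already forced to be finite. Your symmetry argument for the isogeny case matches the paper's.
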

\begin{proof} Supposons que $C$ est un translaté à droite de $S$.
Soit $(g',h')\models\stp(g,h/A)$ avec $g',h'\ind_A
g,h$. Alors $(g'g^{-1},h'h^{-1})$ est générique dans $S$ et
indé\-pen\-dant de $g',h'$ sur $A$. Comme $h'h^{-1}\in\acl(A,g,g')$, on
a $h'h^{-1}\in\acl(A,g'g^{-1},g')$ et donc
$h'h^{-1}\in\acl(A,g'g^{-1})$ car $h'h^{-1}\ind_{A,g'g^{-1}}
g'$. En particulier, le co-noyau
$\coker(S)$ est fini et $S$ est une endogénie de $\pi_1(S)$ vers
$\pi_2(S)$. Si $g\in\acl(A,h)$, on obtient par symétrie que
$\ker(S)$ est fini.\end{proof}

\begin{remark}\label{R:iso} Si $g$ est générique dans un
sous-groupe $G_1\le G$ type-définissable sur $A$, alors $\pi_1(S)$
est d'indice borné dans $G_1$ et $S$ définit une endogénie de $G_1$
dans $\pi_2(S)$. Enfin si $h$ est générique dans un sous-groupe
$H_1\le H$, alors $\pi_2(S)$ a indice borné dans $H_1$.\end{remark}

\begin{remark} Si $G$, $H$ et $S$ sont hyperdéfinissables dans une
théorie simple, on obtient un résultat analogue en remplaçant, dans
les définitions d'endogénie et d'isogénie, fini par
borné.\end{remark}

Jusqu'à la fin de cette partie, on considère une théorie stable $T$
avec un réduit $T_0$, ce réduit ayant élimination géomé\-trique des
imaginaires. On utilisera l'indice $0$ pour indiquer des notions
modèle-théoriques prises au
sens du réduit $T_0$, comme la clôture algébrique ou l'indépendance.
Puisque l'on travaille avec un réduit, la clôture
algébrique dans $T$ sera prise au sens réel.  
Rappelons que l'indépen\-dance au-dessus d'un ensemble $A$
algébriquement clos implique
la $0$-indépen\-dance sur $A$ \cite[Lemme 2.1]{BPW09}. La proposition
suivante donne un
critère pour montrer qu'un sous-groupe connexe type-définissable
d'un groupe $T_0$-type-définissable est lui-même $T_0$-type-définissable. 

\begin{prop}\label{P:expansion}
\`A l'intérieur d'un groupe $T_0$-type-définissable, on considère un
translaté $C$ d'un sous-groupe connexe type-définissable $S$ sur
un ensemble algé\-brique\-ment clos de paramètres réels $A$. Soit $p$
le type générique de $C$ sur $A$ et notons $p_0$ son $T_0$-réduit.
Si $p$ est de rang maximal (par rapport au rang de
Morley, de Lascar ou aux
rangs locaux stratifiés) parmi les complétions de $p_0$
(vu comme $T$-type partiel), alors $S$ est d'indice borné dans le $0$-stabilisateur
$\St_0(p_0)$, qui est son enveloppe $T_0$-type-définissable. 
Si $p$ est l'unique complétion de $p_0$ de rang maximal et est stationnaire,
alors $S$ est $T_0$-type-définissable.\end{prop}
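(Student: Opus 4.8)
The plan is to prove that $S\le\St_0(p_0)$ holds unconditionally, and then to use the maximal-rank hypothesis to force this inclusion to have bounded index (and to be an equality in the uniqueness case). Write $G_0$ for the ambient $T_0$-type-definable group and $R$ for the rank under consideration (Lascar, Morley, or a fixed stratified local rank). Fix a realisation $a$ of $p$ non-forking over $\aclq(A)$, put $q_0:=\stp_0(a/A)$, and read $\St_0(p_0)$ as $\St_0(q_0)$; since $T_0$ has geometric elimination of imaginaries and $A=\acl(A)$ is real, $\St_0(q_0)$ is a $T_0$-type-definable subgroup of $G_0$ defined over $\acl_0(A)\subseteq\aclq(A)$. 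We take $C$ to be a right translate $C=Sa$, the left case being symmetric. Since $S$ is connected and $A$ algebraically closed, $p$ is stationary, so every realisation of $p$ non-forking over $\aclq(A)$ has $T_0$-strong type $q_0$ over $A$. For the inclusion $S\le\St_0(q_0)$: let $t$ be $T$-generic in $S$ over $A$ and let $a'$ realise the non-forking extension of $p$ over $\aclq(A)\cup\{t\}$. Then $ta'\in S\cdot C=C$, and as right translation by $a'$ is an $Aa'$-definable bijection of $S$ onto $C$ we get $R(ta'/Aa')=R(t/Aa')=R(t/A)=R(S)=R(C)$, so $ta'$ is generic in $C$ over $A$ and thus realises $p$. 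Since $a'\ind_A t$ in the sense of $T$, \cite[Lemme~2.1]{BPW09} and transitivity of forking give that $a'$ is $T_0$-independent of $t$ over $\acl_0^{\mathrm{eq}}(A)$; with $a'\models q_0$ and $ta'\models q_0$, the remark following the definition of the stabiliser yields $t\in\St_0(q_0)$. As every element of the connected group $S$ is a product of two $T$-generics over $A$, this gives $S\le\St_0(q_0)$.

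Next we bound the index. From the above, $R(\St_0(q_0))\ge R(S)=R(C)=R(p)$. For the reverse inequality, let $g$ be $T$-generic in $\St_0(q_0)$ over $A$ and let $c$ realise the non-forking extension of $p$ over $\aclq(A)\cup\{g\}$. Exactly as in the previous step, $c$ is $T_0$-independent of $g$ over $\acl_0^{\mathrm{eq}}(A)$ and $c\models q_0$, so $gc\models q_0$ and hence $gc\models p_0$. Therefore $\tp(gc/A)$ is a completion of the partial type $p_0$, so the hypothesis yields $R(gc/A)\le R(p)$; on the other hand, right translation by $c$ together with $g\ind_A c$ gives $R(gc/A)\ge R(gc/Ac)=R(g/Ac)=R(g/A)=R(\St_0(q_0))$. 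Hence $R(\St_0(q_0))=R(p)=R(S)$. Since $S\le\St_0(q_0)$ are both $T$-type-definable of equal rank, the $T$-generic type of $S$ over $A$ is a generic type of $\St_0(q_0)$; its stabiliser is the connected component $\St_0(q_0)^{\circ}$ on one hand and is contained in $S$ on the other (it concentrates on $S$), and as $S$ is connected it equals $S$. So $\St_0(q_0)^{\circ}=S$ and $S$ has bounded index in $\St_0(q_0)=\St_0(p_0)$, its $T_0$-type-definable envelope.

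Now assume $p$ is the unique completion of $p_0$ of maximal rank. Any $g\in\St_0(q_0)$ is a product $g=g_1g_2$ of two $T$-generics $g_1,g_2$ of $\St_0(q_0)$ over $A$. Fixing $i$ and choosing $c_i\models p$ non-forking over $\aclq(A)\cup\{g_i\}$, the computation of the previous paragraph gives $g_ic_i\models p_0$ with $R(g_ic_i/A)=R(g_i/A)=R(\St_0(q_0))=R(p)$; as $\tp(g_ic_i/A)$ is then a completion of $p_0$ of maximal rank, uniqueness forces $g_ic_i\models p$, so $g_ic_i\in C=Sc_i$ (because $c_i\models p$ puts $c_i$ in $C$), whence $g_i=(g_ic_i)c_i^{-1}\in S$. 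Thus $g\in S$, so $\St_0(q_0)=S$ and $S$ is $T_0$-type-definable.

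The main obstacle is the reduct bookkeeping running through the first two steps: $T$-genericity and $T_0$-genericity need not agree, so the argument must work only with the $T$-independence it controls, transfer it to $T_0$-independence over the $T_0$-algebraically closed base $\acl_0^{\mathrm{eq}}(A)$ by means of \cite[Lemme~2.1]{BPW09} and transitivity of forking --- which is exactly where the reality and algebraic closedness of $A$ enter --- and compute $\St_0$ over that base while keeping track of the distinguished completion $q_0$ of $p_0$. Granting this, the conceptual core is the rank descent of the second step: right-multiplying a $T$-generic of $\St_0(q_0)$ by an independent non-forking realisation of $p$ produces a realisation of the partial type $p_0$ of the same $T$-rank as that generic, which by hypothesis cannot exceed $R(p)$; this pins $R(\St_0(q_0))$ down to $R(S)$ and, in the uniqueness case, collapses the whole group onto $S$.
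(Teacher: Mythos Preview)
Your proof of the bounded-index conclusion and of the uniqueness case is correct, but follows a different route from the paper's. The paper first reduces to $C=S$ by adjoining a coset representative to the parameters, then argues conceptually: it shows unconditionally that the $T_0$-type-definable envelope $\bar S$ of $S$ coincides with $\St_0(p_0)$ (because $\bar S$ is a $T_0$-connected $A$-invariant subgroup containing the realisations of $p$, hence of $p_0$, hence $\St_0(p_0)\subseteq p_0p_0^{-1}\subseteq\bar S$), and then observes that a $T_0$-formula is generic for $\bar S$ --- in either theory --- exactly when it lies in $p_0$, so the $T$-generic types of $\bar S$ are precisely the maximal-rank completions of $p_0$. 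Bounded index and the uniqueness case follow at once. Your approach instead bounds $R(\St_0(q_0))$ from above by a direct rank computation on the product $gc$ of a $T$-generic of $\St_0(q_0)$ with an independent realisation of $p$; this is more hands-on and avoids the reduction to $C=S$, while the paper's route yields the sharper structural fact that \emph{all} $T$-generics of $\bar S$ arise as maximal-rank completions of $p_0$.

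There is, however, a genuine omission: you assert but do not prove that $\St_0(p_0)$ is the $T_0$-type-definable envelope of $S$, and this is part of the conclusion. Your argument establishes only that $S$ has bounded index in the $T_0$-type-definable group $\St_0(p_0)$; it does not rule out a strictly smaller $T_0$-type-definable group sitting between $S$ and $\St_0(p_0)$, which would then be the actual envelope. Equivalently, you have not shown that $\St_0(p_0)$ is $T_0$-connected. The paper's short argument for this uses the reduction to $C=S$: then $p$, hence $p_0$, concentrates on $\bar S$, and $T$-connectedness of $S$ forces $\bar S$ to be $T_0$-connected; combined with $S\subseteq\St_0(p_0)\subseteq\bar S$ one gets $\bar S=\St_0(p_0)$. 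Without that reduction your $p$ concentrates on the translate $C$ rather than on $S$, so the inclusion of the realisations of $p_0$ in $\bar S$ is not available, and you would need an extra step here.
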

\begin{proof} 
Comme les hypothèses de la proposition se préservent par extension des
paramètres, en prenant un représentant du translaté de $S$, on peut se
réduire au cas où $p$ est le générique de $S$. 

Remarquons que $S=\St(p)\subseteq\St_0(p_0)$, 
puisque l'indépendance implique la $0$-indépendance. Comme $S$ est 
un sous-groupe connexe $A$-invariant qui contient $p$, l'enveloppe
$T_0$-type-définissable $\bar S$ est aussi un sous-groupe connexe
$A$-invariant et contient $p_0$. Donc, il contient $\St_0(p_0)\subseteq
p_0p_0^{-1}$. Il suit que $\bar S=\St_0(p_0)$, qui est alors
$0$-connexe. En particulier, le type $p_0$ est l'unique $0$-type
générique de $\bar S$.

Rappelons qu'une formule est générique dans un groupe stable si un
nombre fini de ses translatés recouvre ce groupe. Ainsi,
une $0$-formule est générique pour $\bar S$ (au sens de $T_0$ comme
au sens de $T$) si et seulement si elle l'est dans $p_0$. Les types
génériques de $\bar S$ sont donc précisément les complétions de
$p_0$ de rang maximal.

Ainsi, si $p$ est de rang maximal parmi les complétions de $p_0$,
alors $p$ est générique pour $\bar S$, ce qui entraîne que l'indice de
$S$ dans $\bar S$ est borné. Si de plus $p$ est l'unique complétion de
rang maximal de $p_0$, alors $p$ est l'unique type générique de
$\bar S$, qui est donc connexe (au sens de $T$) et égal à $S$.
\end{proof}

Pour terminer ces préliminaires, on rappelle un fait
qui sera utilisé à plusieurs reprises par la suite. 
Soient $G$ un groupe connexe type-définissable sur $\emptyset$ dans 
$T$, et $a$ et $b$ deux génériques indépendants de $G$.

\begin{fact}\label{F:reduitgroupe}\mbox{}\cite[Lemme 3.4 et Remarque
3.5]{BPW09}
Après adjonction au langage des éléments d'une suite de Morley $D$ du
générique de $G$ au-dessus de $a,b$, on pose
$$\begin{aligned} a_1&=\acl_0(\acl(b),\acl(ab))\cap\acl(a),\\
b_1&=\acl_0(\acl(a),\acl(ab))\cap\acl(b),\\
(ab)_1&=\acl_0(\acl(a),\acl(b))\cap\acl(ab).\end{aligned}$$
Alors $a_1$, $b_1$ et $(ab)_1$ sont indépendants deux-à-deux, chacun est
$0$-algébrique sur les deux autres, et de plus
$$\acl(b),\acl(ab)\indi0_{a_1} \acl(a).$$
\end{fact}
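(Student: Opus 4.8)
The plan is to realise $a_1$, $b_1$ and $(ab)_1$ as the geometric $T_0$-canonical bases of $\acl(a)$, $\acl(b)$ and $\acl(ab)$ over the remaining two sets, and then to read off the three assertions from forking calculus in $T_0$. We work throughout in the language expanded by the Morley sequence (which otherwise plays no role in this argument) and abbreviate $A=\acl(a)$, $B=\acl(b)$, $W=\acl(ab)$; these sets are $T_0$-algebraically closed since $\acl_0\subseteq\acl$. As $a$, $b$, $ab$ are pairwise $T$-independent (they are generic in the connected group $G$), the sets $A$, $B$, $W$ are pairwise $T$-independent over $\acl(\emptyset)$, hence pairwise $T_0$-independent by \cite[Lemme 2.1]{BPW09}; since $a_1\subseteq A$, $b_1\subseteq B$ and $(ab)_1\subseteq W$, this already yields the pairwise independence.

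For the displayed independence I would let $\bar d$ be the geometric $T_0$-canonical base of $\tp_0(A/BW)$, so that $A\indi{0}_{\bar d}BW$ and $\bar d\subseteq\acl_0(A)\cap\acl_0(BW)$; in particular $\bar d\subseteq A\cap\acl_0(BW)=a_1$. Conversely, if $x\in a_1$ then $x\in A$ gives $x\indi{0}_{\bar d}\acl_0(BW)$, and since $x\in\acl_0(BW)$ while $\bar d\subseteq\acl_0(BW)$, the fact that algebraic closure does not increase along a nonforking extension forces $x\in\acl_0(\bar d)$. Thus $a_1$ and $\bar d$ are $T_0$-interalgebraic and $A\indi{0}_{a_1}BW$, that is, $\acl(b),\acl(ab)\indi{0}_{a_1}\acl(a)$. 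The same argument, using $b\in\acl(a,ab)$ and $ab\in\acl(a,b)$, gives $B\indi{0}_{b_1}AW$ and $W\indi{0}_{(ab)_1}AB$.

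The interalgebraicity is the part where the group law enters, and I would prove it by peeling off $B$ and $W$ one at a time. Fix a finite tuple $x$ in $a_1$. From $W\indi{0}_{(ab)_1}AB$ and $x\subseteq A$ one gets $W\indi{0}_{(ab)_1 B}x$, hence $x\indi{0}_{(ab)_1 B}W$; combined with $x\in\acl_0(BW)$ this forces $x\in\acl_0((ab)_1 B)$. Feeding this into $B\indi{0}_{b_1}AW$ and using $x\subseteq A$, $(ab)_1\subseteq W$, one obtains $B\indi{0}_{b_1(ab)_1}x$, hence $x\indi{0}_{b_1(ab)_1}B$, and together with $x\in\acl_0((ab)_1 B)$ this gives $x\in\acl_0(b_1,(ab)_1)$. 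Letting $x$ range over $a_1$ yields $a_1\subseteq\acl_0(b_1,(ab)_1)$; the inclusions $b_1\subseteq\acl_0(a_1,(ab)_1)$ and $(ab)_1\subseteq\acl_0(a_1,b_1)$ follow by permuting $a$, $b$, $ab$, each time invoking that one of them lies in $\acl$ of the other two.

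The one genuinely delicate point is the identification $a_1=\cb_0(\tp_0(A/BW))$ up to $T_0$-interalgebraicity: one must handle $A$ as a directed union of finite tuples, invoke the geometric elimination of imaginaries in $T_0$ to replace imaginary canonical bases by real ones, and apply repeatedly the stable-theory fact that $\acl_0$ is unchanged along nonforking extensions. Everything else is routine forking calculus, and the only input from the ambient theory $T$ is the pairwise $T$-independence of $a$, $b$, $ab$ together with the three algebraicity relations coming from the group law.
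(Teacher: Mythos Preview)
The paper does not supply a proof of this statement: it is recorded as a Fact, with a reference to \cite[Lemme~3.4]{BPW09}. Your argument is correct and is the standard one (and presumably close to that of \cite{BPW09}): realise each of $a_1$, $b_1$, $(ab)_1$ as the geometric $T_0$-canonical base of one of the three $\acl$-closed sets over the other two, read off the three independences $A\indi0_{a_1}BW$, $B\indi0_{b_1}AW$, $W\indi0_{(ab)_1}AB$, and then extract the mutual $0$-algebraicity by two successive base-change steps exactly as you describe. Your aside that the Morley sequence plays no role in \emph{this} particular argument is accurate; it is introduced in the statement because it is needed for the companion Fact~\ref{R:confgroupe} (construction of the group $H$) and, in the present paper, in the proof of Lemma~\ref{L:preind}.
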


Nous allons en déduire une variante de \cite[Theorem 3.1]{BPW09} qui est plus
explicite sur l'image du morphisme. Rappelons que le préfixe $*$ indique que le
nombre de variables est infini.
\begin{theorem}\label{T:morphisme} Soit $G$ un groupe
connexe type-définissable sur $\emptyset$ dans  une théorie stable $T$ qui a un
réduit $T_0$  éliminant géomé\-trique\-ment les imaginaires. Alors il
existe un ensemble $A$ de paramètres algébriquement clos et un
homomorphisme définissable $\phi^*$ de $G$ vers un groupe $T_0$-$*$-interprétable
$H^*$ sur $A$, tel que pour chaque couple de génériques indépendants $a$ et $b$
de $G$ sur $A$ on ait 
$$\acl(b,A),\acl(ab,A)\indi0_{\phi^*(a),A}\acl(a,A).$$
De plus, s'il existe un homomorphisme définissable $\phi$ de $G$ dans un
groupe $T_0$-inter\-pré\-table sur un ensemble $B$ algébriquement clos tel que
pour un générique $a$ de $G$ sur $B$, le $0$-rang de Lascar $U_0(\phi(a)/B)$ est
maximal et fini, alors pour tout $C\supseteq B$ algébriquement clos, on a 
$$\acl(b,C),\acl(ab,C)\indi0_{\phi(a),C}\acl(a,C),$$
pour chaque couple de génériques indépendants $a$ et $b$ de $G$ au dessus de $C$.
\end{theorem}
\begin{proof} Prenons trois génériques indépendants $a,b$ et $c$ de $G$ sur une
suite $D$ de Morley du générique de $G$ et posons
$$a_1=\acl_0(\acl(b,D),\acl(ab,D))\cap\acl(a,D).$$
Comme $a,b,ab\equiv_{D}^{stp} a^{-1},c^{-1},(ca)^{-1}$, on a
$$a_1=\acl_0(\acl(c,D),\acl(ca,D))\cap\acl(a,D).$$
Pour $b_1,(ab)_1,c_1,(ca)_1$ et $(cab)_1$ définis de manière analogue, le fait
\ref{F:reduitgroupe} implique que le $6$-uple
$(a_1,b_1,c_1,(ab)_1,(ca)_1,(cab)_1)$ est un quadrangle $0$-algébrique~; le
théorème de configuration de groupe \cite{eHPhD}
(voir aussi \cite{Bou89} ou \cite[Theorem 5.4.5 et Remark 5.4.9]{Pi96}), dont la
preuve reste valable pour les uples infinis,
donne l'existence d'un groupe $T_0$-$*$-interprétable $0$-connexe $H^*_0$ sur un
ensemble algébriquement clos $A_0\supseteq D$ de para\-mètres indépendant de
$a,b,c$ sur $D$. Par transitivité, on a $a,b,c,D\ind A_0$. De plus, les uples
$a_1$ et $b_1$ sont respectivement $0$-interalgébriques avec des $0$-génériques
$h$ et $h'$ de $H^*_0$ sur $A_0$, et $(ab)_1$ est $0$-interalgébrique avec $hh'$
sur $A_0$. Comme $a$, $b$ et $ab$ sont deux-à-deux indépendants sur $A_0$,
les couples 
$$(a,h),\qquad(b,h')\qquad\text{et}\qquad(ab,hh')$$
les sont aussi. D'après le lemme \ref{F:iso} et la
remarque \ref{R:iso}, le stabilisateur $\St(a,h/A_0)$ définit une endogénie
$\phi^*_0$ de $G$ dans $H^*_0$. On peut supposer que $\phi^*_0$ est un homomorphisme,
en remplaçant $H^*_0$ par $N_{H^*_0}(\coker(\phi^*_0))/\coker(\phi^*_0)$, qui est
également $T_0$-$*$-interprétable. Notons que les trois couples ont même type
sur $A_0$. Ainsi $(a,h)$ est générique dans $\St(a,h/A_0)$, et $\phi^*_0(a)=h$.

Comme $A_0\ind a,b,c,D$ implique $A_0\indi0\acl(a,b,c,D)$ d'après \cite[lemme
2.1]{BPW09}, et donc $A_0\indi0_{a_1}\acl(a,b,c)$, l'indépendance
$$\acl(b),\acl(ab)\indi0_{a_1}\acl(a)\quad\text{donne}\quad\acl(b),
\acl(ab)\indi0_{a_1,A_0}\acl(a).$$
Par $0$-interalgébricité, on obtient
$$\acl(b),\acl(ab)\indi0_{\phi^*_0(a),A_0}\acl(a).$$
Par récurrence, on construit une suite $(A_n,H^*_n,\phi^*_n:n<\omega)$ avec
les propriétés suivantes (en posant $A_{-1}=\emptyset$)~:
\begin{itemize}
\item l'ensemble $A_n\supseteq A_{n-1}$ est algébriquement clos et indépendant
de $a,b,c$,
\item le groupe $H^*_n$ et l'homomorphisme 
$\phi^*_n:G\to H^*_n$ sont $T_0$-$*$-interprétables sur $A_n$,
\item on a l'indépendance
$$\acl(b,A_{n-1}),\acl(ab,A_{n-1})\indi0_{\phi^*_n(a),A_n}\acl(a,A_{n-1}).$$
\end{itemize}
\medskip
Posons $A=\bigcup_{n<\omega}A_n$, $H^*=\prod_{n<\omega}H^*_n$ et 
$\phi^*=\prod_{n<\omega}\phi^*_n:G\to H^*$. Puisque chaque formule dans $\tp(\acl(b,A),\acl(ab,A)/\acl(a,A))$ n'utilise qu'un
nombre fini de paramètres et ne dévie donc pas sur $\phi^*_n(a),A_n$ pour $n$
suffisamment grand, on conclut que 
\begin{equation*}\tag{\dag}\acl(b,A),\acl(ab,A)\indi0_{\phi^*(a),A}\acl(a,A).
\end{equation*}

Supposons maintenant que $\phi$ soit un homomorphisme interprétable de
$G$ vers un groupe $T_0$-interprétable $H$ sur un ensemble $B$ algébriquement
clos tel que pour $a$ générique dans $G$ sur $B$ le $0$-rang de
Lascar $U_0(\phi(a)/B)$ soit maximal et fini.
Pour $C\supseteq B$ algébriquement clos, construisons,
comme précédemment et au-dessus de $C$, un
homomorphisme définissable $\phi^*$ de $G$ vers un groupe
$T_0$-$*$-interprétable $H^*$ sur $A$ ($\supseteq C$).
Par stabilité, le groupe $H^*$ est une limite projective
$\varprojlim\pi_i(H^*)$, où chaque $\pi_i(H^*)$ est un groupe 
$T_0$-interprétable. Si $a$, $b$ sont deux génériques indépendants
de $G$ sur $A$, alors $\phi(a)\in\acl(a,A)$, $\phi(b)\in\acl(b,A)$ et
$\phi(ab)\in\acl(ab,A)$. Ainsi l'indépendance $(\dag)$ donne
$$\phi(a)=\phi(ab)\cdot\phi(b)^{-1}\in\acl(a,A)\cap\acl_0(\acl(b,A),\acl(ab,
A))=\acl_0(\phi^*(a),A).$$
Il existe donc $i$ tel que $\phi(a)\in\acl_0((\pi_j\circ\phi^*)(a),A)$ pour tout 
$j\ge i$. Inversement, comme $(\phi,\pi_j\circ\phi^*)$
est aussi un homomorphisme interprétable de $G$ vers un groupe
$T_0$-interprétable, on a
$$U_0(\phi(a)/A)\le U_0(\phi(a),(\pi_j\circ\phi^*)(a)/A)\le
U_0(\phi(a)/A)$$
par maximalité du rang. Ainsi $U_0((\pi_j\circ\phi^*)(a)/\phi(a),A)=0$ pour tout
$j\ge i$, et $\phi^*(a)\in\acl_0(\phi(a),A)$, ce qui donne
$$\acl(b,A),\acl(ab,A)\indi0_{\phi(a),A}\acl(a,A).$$
Puisque $A\ind a$ on a $A\ind_{C}\acl(a,C)$, et donc
$A\indi0_{C}\acl(a,C)$ par \cite[lemme 2.1]{BPW09}. 
Alors $\phi(a)\in\acl(a,B)\subseteq\acl(a,C)$ implique 
$$\phi(a),A\indi0_{\phi(a),C}\acl(a,C),$$
et par transitivité
$$\acl(a,C)\indi0_{\phi(a),C}\acl(b,C),\acl(ab,C).\qedhere$$
\end{proof}

\section{Die liebe Farbe}\label{S:liebe}
Dans cette partie nous rappelons quelques propriétés du mauvais corps
obtenu dans \cite{BHPW06} qui seront utilisées pour la démonstration
du théorème A. Il s'agit d'un corps $K$ algébriquement clos de
caractéristique nulle muni d'un sous-groupe propre multiplicatif
divisible et sans torsion, noté $\U$ et coloré en vert, tel que
$\RM(K)=2$ et $\RM(\U)=1$ (où $\RM(X)$ désigne le rang de Morley de
$X$, qui dans tout groupe de rang de Morley fini est égal au rang de
Lascar). On peut alors voir $\U$ comme un $\Q$-espace vectoriel qui
est fortement minimal avec toute la structure induite. Nous
travaillons dans un langage relationnel, à l'exception de la loi du
groupe multiplicatif et, pour chaque $q\in\Q$, de la multiplication
linéaire par $q$ sur les éléments verts. \`A l'élément zéro près, nos
structures sont donc des sous-groupes multiplicatifs d'un corps de
caractéristique nulle dont le sous-groupe vert est divisible sans
torsion. Pour deux structures $A$ et $B$, on note $A\sse B$ la
structure engendrée, qui est, modulo l'élément $0$, le plus petit
sous-groupe multiplicatif les contenant et clos par racines vertes.

Pour deux structures $A\subseteq B$ avec $B$ finiment engendrée sur
$A$, on définit une prédimension relative
$$\dd(B/A) = 2 \,\tr(B/A) - \ld_\Q(\U(B)/\U(A))~;$$
lorsque $A$ est vide, on l'omet. On dit que $A$ est {\em
 autosuffisante} dans $B$, noté $A\le B$, si $\dd(B_0/A)\ge0$ pour
tout $A\subseteq B_0\subseteq B$. Une extension autosuffisante $A\le
B$ est {\em minimale} s'il n'y a pas de structure intermédiaire propre
autosuffisante dans $B$.

Le corps $K$ est caractérisé par les conditions suivantes:\begin{itemize} 
\item $\dd(A)\ge 0$ pour tout $A\subset K$ finiment engendré.
\item Si $A\le K$ et $A\le B$ est une extension minimale , alors il existe un 
plongement de $B$ dans $K$ sur $A$ avec image autosuffisante.
\item Si $A\le K$ et $A\le B$ avec $\dd(B/A)=0$, alors il n'y a qu'un nombre
fini de plongements de $B$ dans $K$ sur $A$.\end{itemize}
Rappelons que $\delta$ est {\em sous-modulaire}~: pour $A$ et $B$ dans
$K$ finiment engendrées,
$$\delta(A\sse B)+\delta(A\cap B) \leq \delta(A)+\delta(B).$$ 
En particulier, l'intersection de deux sous-structures autosuffisantes
l'est aussi. Donc pour tout  $A\subseteq K$, il existe une plus petite
sous-structure autosuffisante de $K$ contenant $A$, sa {\em clôture
  autosuffisante} $\sscl{A}$, qui est algébrique sur $A$ dans la
théorie $T$ du mauvais corps $K$ et  $A$-invariante en tant
qu'ensemble. Si $A$ est finiment engendré, alors $\sscl{A}$ l'est
aussi. De plus, 
$$\RM(b/A)=\dd(\sscl{Ab}/\sscl{A}).$$ 
L'indépendance $a\ind_Cb$ au sens de $T$ pour deux uples $a$ et $b$
au-dessus de $C=\sscl{aC}\cap\sscl{bC}$ est caractérisée par les
propriétés équivalentes suivantes~:\\
\begin{itemize}
\item $\delta(\sscl{abC}/\sscl{bC})=\delta(\sscl{aC}/C)$\\
\item $\sscl{aC}\indi 0_C\sscl{bC}$, $\sscl{abC}=\sscl{aC}\sse\sscl{bC}$ et
$\U(\sscl{abC})=\U(\sscl{aC})\cdot\U(\sscl{bC})$,\\
\end{itemize}
où l'indice $0$ fait référence au réduit au pur langage des
anneaux, donc 
à la théorie $T_0$ des corps algébriquement clos de caractéristique
$0$. Notons que $T_0$ élimine les imaginaires, ce qui nous permet
d'utiliser les résultats de la section précédente. De plus, si $A$ et
$B$ sont autosuffisants et indépendants sur leur intersection, alors
la structure engendrée $A\sse B$ est, modulo $0$, le produit des groupes 
$A$ et $B$. La théorie $T$ est relativement
CM-triviale au-dessus de $T_0$ par rapport à la clôture
autosuffisante \cite[Théorème 6.4]{BPW09}.

\begin{remark}\label{R:ind}
Soient $A\subseteq B$ autosuffisants et $C\subseteq\acl(A)$
autosuffisant tel que $B\cap C=A$. Alors, on a $B\indi 0_A C$.
\end{remark}
\begin{proof}
Par le caractère local de la déviation, on peut supposer que $C$ est finiment
engendré au-dessus de $A$. Comme $C$ est $T$-algébrique sur $A$, on a
$B\ind_AC$~; la caractérisation de l'indépendance implique alors
$B\indi0_AC$.\end{proof}

Pour deux structures
$A\subseteq B$, on appelle une {\em base verte} de $B$ sur $A$ tout
uple de $\U(B)$ qui complète une base linéaire de $\U(A)$ en une base linéaire
de $\U(B)$~; notons qu'elle sera linéairement indépendante sur $A$
car toute combinaison linéaire de points verts est verte.
En particulier, une base verte de $B$ est une base linéaire de $\U(B)$. 

Terminons par quelques remarques utiles sur le corps $K$ :
\begin{remark}\label{R:verts}
Supposons que la sous-structure $A$ est autosuffisante.
\begin{enumerate}
\item\label{R:verts:pasdenouveauxverts}
La sous-structure $\acl_0(A)$ est à nouveau autosuffisante et elle ne 
contient pas de nouveaux points colorés.
\item\label{R:verts:algebriquesurverts} L'élément $b$ est algébrique  
sur $A$ si et seulement s'il existe $B\supseteq Ab$ finiment engendré sur 
$A$ tel que $\dd(B/A) = 0$. Si $\bar x$ est une base verte de $B$ sur $A$ 
alors $\tr(B/A) = \tr(\bar x/A) = |\bar x|/2$.
En particulier, l'ensemble $B$ est inclus dans $\acl_0(A,\U(B))$~; si
$A$ est vert, on a même $B \subseteq \acl_0(\U(B))$. 
\item\label{R:verts:sommedeuxverts} Tout point blanc est
somme de deux points verts et chacun de ces couples est algébrique sur ce point
blanc. Par compacité, l'ensemble de ces couples est fini. 
Supposons que $A$ est engendrée par un ensemble fini $\Theta$ et considérons  la
sous-structure $B$ (finiment)  engendrée par une base verte de $A$ et l'ensemble
des couples verts dont la somme est un élément blanc de $\Theta$. Alors $B$  est
autosuffisante et contenue dans $\acl(A)$~:~la sous-structure $A\sse B$ l'est
car $\dd (A\sse B)=\dd(A)$. Puisque  $\U(A\sse B) = \U(B)$, on en déduit
l'autosuffisance de $B$. 
\end{enumerate}
\end{remark}

\section{Cherchez le tore}\label{cherchez}
Dans cette partie, nous allons démontrer le théorème A énoncé dans
l'introduction. Pour ce faire, nous allons décomposer le groupe en
une partie $0$-algébrique et une partie $0$-transcendante (où l'indice
$0$ fait toujours référence au réduit de pur corps algébriquement
clos). En utilisant cette décomposition, on construira un groupe algébrique.
Finalement, on exhibera une isogénie non triviale de notre groupe 
vers une section du groupe algébrique construit.

Rappelons qu'un corps de rang de Morley fini élimine les imaginaires
\cite{Wa01} et donc tout groupe interprétable est définissablement
isomorphe à un groupe définissable. En particulier, le quotient
$K^*\!/\U$ est définissablement isomorphe à un
groupe définis\-sable. Hélas, pour deux uples réels $a$ et $b$ en
bijection avec deux génériques indépen\-dants de $K^*\!/\U$, l'élément
$a_1$  décrit dans le fait \ref{F:reduitgroupe} est vide. Cet exemple
sera notre fil conducteur pour faire une analyse plus précise des
groupes définissables. 

Fixons donc un groupe infini connexe $G$ définissable 
dans un corps vert collapsé $(K,\U)$. Nous ajoutons aux paramètres de
la théorie ceux nécessaires pour définir $G$ ainsi que la clôture
algébrique d'une suite de Morley $D$ du générique de $G$. Nous
supposons en particulier que $G$ est définissable sur $\emptyset$. 

Prenons $a$, $b$ et $c$ trois points génériques de $G$ indépendants,
et posons $$a_1=\acl(a)\cap \acl_0(\acl(b),\acl(ab)).$$

Alors $a_1$ est autosuffisant comme intersection de deux sous-structures
autosuffisantes. Notons que $\acl(a)$ et $a_1$ peuvent avoir degré de
transcendance infini. Par ailleurs, on a également
$a_1=\acl(a)\cap \acl_0(\acl(c),\acl(ca))$ par \cite[Lemme
3.4]{BPW09}. 

Le fait \ref{F:reduitgroupe} donne
$$\acl(a)\indi0_{a_1}\acl(b),\acl(ab)\quad\text{ainsi que}\quad 
\acl(a)\indi0_{a_1} \acl_0(\acl(c),\acl(ca)).$$

\begin{lemma}\label{L:a_to_abar}
Il existe un uple autosuffisant $0$-algébriquement clos $\a$  tel
que~:
\begin{itemize}
\item $\a$ contient $a$, $a^{-1}$ et $a_1$,
\item $\a$ est algébrique sur $a$,
\item $\a$ est $0$-algébrique sur sa base verte,
\item le degré de transcendance  $\tr(\a/a_1)$ est fini.
\end{itemize}
On obtient de même des uples $\b$,
$\ab$, $\cc$, $\ca$ et $\cab$, et le diagramme suivant~: 

\begin{center}
\begin{picture}(200,110)
\put(0,0){\line(1,1){100}}
\put(200,0){\line(-1,1){100}}
\put(0,0){\line(5,2){143}}
\put(200,0){\line(-5,2){143}}
\put(-13,-2){$\ab$}
\put(203,-2){$\ca$}
\put(97,104){$\a$}
\put(47,55){$\b$}
\put(150,55){$\cc$}
\put(95,45){$\cab$}
\put(-2,-2){$\bullet$}
\put(197,-2){$\bullet$}
\put(97,98){$\bullet$}
\put(98,38){$\bullet$}
\put(54,55){$\bullet$}
\put(141,55){$\bullet$}
\put(16,30){$A_1$}
\put(173,30){$A_2$}
\put(50,13){$A_4$}
\put(140,13){$A_3$}
\end{picture}
\end{center}
\vskip4mm
\noindent où $A_1$ est l'ensemble autosuffisant $\acl_0(\sscl{\a,\b,\ab})$ 
(et de même pour les autres droites
$A_i$). De plus~:\begin{itemize}
\item $\a=A_1\cap A_2$ et $A_1\ind_a A_2$,
\item $A_1\indi0_{\a}A_2$, 
\item la structure  $A_1\sse A_2$ est autosuffisante et $\U(A_1\sse 
A_2)=\U(A_1)\cdot\U(A_2)$, où  $\U(A_1)$ et $\U(A_2)$ sont en somme
directe au-dessus de $\U(\a)$ en tant que $\Q$-espaces vectoriels.
\end{itemize}
\end{lemma}

\begin{proof}
Considérons un uple réel $a_E$ représentant
l'ensemble $\{a,a^{-1}\}$, où l'élément $a^{-1}$ est l'inverse de $a$ au
sens du groupe $G$. 
Alors $a$, $a^{-1}$ et $a_E$ sont tous interalgébriques au sens de la 
théorie $T$. 

La clôture autosuffisante $\sscl{a,a^{-1}}$ est une
sous-structure finiment engendrée et algébrique sur $a_E$.
Soit $\Theta$ un ensemble fini $a_E$-définissable contenu dans 
$\sscl{a,a^{-1}}$ et contenant $a$, $a^{-1}$ ainsi qu'une base verte de
$\sscl{a,a^{-1}}$. Alors la structure engendrée par $\Theta$ est égale à
$\sscl{a,a^{-1}}$. 
Soit $a'_2$ l'ensemble fini $a_E$-définissable constitué de $\U(\Theta)$ 
ainsi que tous les couples de points verts dont la somme est un point blanc de
$\Theta$. Par la
remarque \ref{R:verts}(\ref{R:verts:sommedeuxverts}), l'uple vert fini
$a'_2$ engendre une sous-structure verte autosuffisante incluse dans
$\acl(a)$. 
De plus, les éléments $a$ et $a^{-1}$ sont $0$-algébriques sur $a'_2$.

En utilisant la définissabilité de $a'_2$ sur $a_E$ et le fait que tous 
les points ont le même type, on décompose de la même
façon les points $b$, $ab$, $c$, $ca$ et $cab$. 

Posons $$\begin{array}{ll}
A_1 = \acl_0(\sscl{a_1,a'_2,b_1,b'_2,ab_1,ab'_2}), &
A_2 = \acl_0(\sscl{a_1,a'_2,c_1,c'_2,ca_1,ca'_2}), \\
A_3 = \acl_0(\sscl{b_1,b'_2,ca_1,ca'_2,cab_1,cab'_2}), &
A_4 = \acl_0(\sscl{ab_1,ab'_2,c_1,c'_2,cab_1,cab'_2}),
\end{array}$$
où la multiplication est prioritaire sur les indices~: par exemple
$cab_1$ signifie $(cab)_1$ et $ab'_2$ signifie $(ab)'_2$. 

Par la
remarque \ref{R:verts}(\ref{R:verts:pasdenouveauxverts}), les
sous-structures $A_1$, $A_2$, $A_3$ et $A_4$ sont toutes 
autosuffisantes.

Si l'on pose $$
\a=\acl(a)\cap A_1,\quad\b=\acl(b)\cap A_1 \quad \text{et}\quad
\ab=\acl(ab)\cap A_1,$$
qui sont autosuffisants et $0$-algébriquement clos, alors
$a_1,a'_2\subseteq\a=\acl_0(\a)$,  donc
$a\subseteq\a\subseteq\acl(a)$.
Il suit que $A_1=\acl_0(\sscl{\a,\b,\ab})$. De plus, on a $\a=\acl(a)\cap 
A_2$, car $(b,ab)$ et 
$(c^{-1},(ca)^{-1})$ ont le même type sur $a_E$,  donc 
$A_1\equiv_{a_E}A_2$. On définit $\cc$, $\ca$ et $\cab$ de façon analogue. 

La remarque \ref{R:ind} donne $A_1\indi0_{\a}\acl(a)$. Puisque $a'_2
\subseteq \a\subseteq\acl(a)=\acl(a'_2)$, on a $\dd(\a/a'_2)=0$.
Donc $\a = \acl_0(\U(\a))$ d'après la remarque \ref{R:verts}
(\ref{R:verts:algebriquesurverts}).

Le  fait \ref{F:reduitgroupe} donne que
$\acl_0(a_1,b_1,ab_1)=\acl_0(a_1,b_1)$, qui  est 
autosuffisant. 
Comme $a'_2,b'_2,ab'_2$ sont finis, le dégré de transcendance
$\tr(A_1/a_1,b_1)$ est fini. Puisque
$$\acl(a)\indi0_{a_1} \acl(b),\acl(ab),$$ 
on conclut que le degré de transcendance
$$\tr(\a/a_1)=\tr(\a/a_1,b_1,b'_2,ab_1,ab'_2)\le\tr(A_1/a_1,b_1),$$
est aussi fini.

Puisque $b$ et $c$ sont indépendants sur $a$, on a
$A_1\ind_a A_2$. Donc
$$\a\subseteq A_1\cap A_2\subseteq A_1\cap\acl(a)=\a.$$
Ainsi $A_1\cap A_2=\a$ et 
$$A_1\ind_{\a} A_2,$$
car $a$ et $\a$ sont interalgébriques  (au sens $T$). Il suit de la
caractérisation de l'indé\-pen\-dance que  $A_1\sse A_2$ est 
autosuffisante, 
$$A_1\indi0_{\a}A_2\quad\text{et}\quad\U(A_1\sse 
A_2)=\U(A_1)\cdot\U(A_2),$$
où $\U(A_1)$ et $\U(A_2)$ sont en somme directe au-dessus de $\U(\a)$
en tant que $\Q$-espaces vectoriels.
\end{proof}

Nous venons donc de remplacer les points génériques du groupe $G$ par
des uples autosuffisants interalgébriques, qui sont $0$-algébriques sur 
leurs bases vertes. Cela nous permettra de construire une 
$0$-configuration de groupe à partir de points colorés. Quoique le 
diagramme précédent ne donne pas une $0$-configuration de groupe, les 
droites sont indépendantes sur leurs intersections communes. 
\begin{lemma}\label{L:preind}
$$A_1 \indi0_{\a,\b} A_2,A_3.$$
\end{lemma}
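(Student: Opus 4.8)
Le plan est de renforcer, via la remarque~\ref{R:ind}, une ind\'ependance au sens de $T$ en une ind\'ependance au sens de $T_0$. Comme $\a\subseteq A_2$, $\b\subseteq A_1$ et $\b\subseteq A_3$, la transitivit\'e de l'ind\'ependance au sens de $T_0$ ram\`ene l'\'enonc\'e \`a la conjonction de $A_1\indi0_{\a}A_2$, d\'ej\`a \'etablie plus haut, et de $A_1\indi0_{A_2,\b}A_3$, que je chercherais donc \`a prouver.

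Je poserais $P=\sscl{A_2,\b}$, qui est autosuffisant. Comme $b$ est ind\'ependant de $a,c$, les structures $A_2$ et $\b$ sont $T$-ind\'ependantes, de sorte que $P=A_2\sse\b$ et $\U(P)=\U(A_2)\cdot\U(\b)$~; ces produits \'etant $T_0$-d\'efinissables sur $A_2,\b$, on a $\acl_0(P)=\acl_0(A_2,\b)$. Par ailleurs $a,b,c\in\acl_0(A_2,\b)$ (via les $a^2,b^2,c^2$ qui figurent parmi les g\'en\'erateurs), d'o\`u $\acl(P)=\acl(a,b,c)$. Comme $ab=a\cdot b$, $ca=c\cdot a$ et $cab=(ca)\cdot b$, on a $A_1\subseteq\acl(a,b)$ et $A_3\subseteq\acl(a,b,c)=\acl(P)$~; en utilisant $\b\subseteq A_1\cap A_3$, les sous-structures autosuffisantes $\sscl{A_1,P}=\sscl{A_1,A_2}$ et $\sscl{A_3,P}=\sscl{A_2,A_3}$ sont incluses dans $\acl(P)$. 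La remarque~\ref{R:ind} appliqu\'ee avec $A=P$, $B=\sscl{A_2,A_3}$ et $C=\sscl{A_1,A_2}$ donnerait alors $B\indi0_PC$, donc $A_1\indi0_PA_3$, c'est-\`a-dire $A_1\indi0_{A_2,\b}A_3$ puisque $\acl_0(P)=\acl_0(A_2,\b)$. Il ne resterait donc qu'\`a \'etablir l'\'egalit\'e
$$\sscl{A_1,A_2}\cap\sscl{A_2,A_3}=\sscl{A_2,\b}.$$

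C'est l'\'etape d\'ecisive et le principal obstacle. L'inclusion de droite \`a gauche est imm\'ediate~; l'autre traduit que le quadrangle est en position assez g\'en\'erale, les enveloppes $\sscl{A_1,A_2}$ et $\sscl{A_2,A_3}$ ne partageant rien de plus que $A_2$ et le point $\b$. Je l'obtiendrais en exploitant que chaque structure en jeu est gouvern\'ee par ses points verts (remarque~\ref{R:verts} : $\a\in\acl_0(\U(\a))$, et de m\^eme pour $\b,\ab,\cc,\ca,\cab$), de sorte qu'un \'el\'ement de l'intersection serait $T_0$-alg\'ebrique \`a la fois sur $A_2\cup\b$ augment\'e des points verts de $\ab$, et sur $A_2\cup\b$ augment\'e de ceux de $\cab$~; la comparaison des pr\'edimensions relatives par la sous-modularit\'e de $\delta$, jointe \`a l'ind\'ependance $\Q$-lin\'eaire de $\U(\ab)$ et de $\U(\cab)$ au-dessus de $\U(A_2)\cdot\U(\b)$ (cons\'equence de l'ind\'ependance dans $G$ de $ab$ et de $cab$ au-dessus de $\{a,b,c,ca\}$) r\'eduirait alors le probl\`eme \`a la transversalit\'e des donn\'ees $T_0$-alg\'ebriques situ\'ees au-dessus de ces points verts. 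C'est pr\'ecis\'ement cette transversalit\'e qui constituera la difficult\'e, et que je compte d\'eduire de la CM-trivialit\'e relative de $T$ au-dessus de $T_0$ par rapport \`a la cl\^oture autosuffisante.
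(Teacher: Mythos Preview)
Your reduction via transitivity is correct: since $\b\subseteq A_1$ and $\a\subseteq A_2$, the statement is equivalent to the conjunction of $A_1\indi0_{\a}A_2$ (already known) and $A_1\indi0_{A_2,\b}A_3$. The setup for applying Remark~\ref{R:ind} is also sound, and you correctly identify the intersection
\[
\sscl{A_1,A_2}\cap\sscl{A_2,A_3}=A_2\sse\b
\]
as the heart of the matter. But your sketch for this equality does not go through.

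The claim that an element of the intersection is $T_0$-alg\'ebrique over $A_2\cup\b$ together with $\U(\ab)$ (respectively $\U(\cab)$) is not justified at this stage. The structure $A_1=\acl_0(\sscl{\a,\b,\ab})$ may contain green points of $\sscl{\a,\b,\ab}$ lying \emph{outside} $\U(\a)\cdot\U(\b)\cdot\U(\ab)$: these are exactly the points later called $t_1$. The fact that $t_1\in\acl_0(\a,\b,\ab)$ is Proposition~\ref{modify}(2), whose proof uses Lemma~\ref{L:ind}, which in turn relies on the present lemma; so you cannot assume it here. Likewise, your claimed linear independence of $\U(\ab)$ and $\U(\cab)$ over $\U(A_2)\cdot\U(\b)$ would need the direct-sum decomposition of Lemma~\ref{L:ind}, again downstream of this lemma. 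Finally, the appeal to relative CM-triviality is not a proof: that property concerns canonical bases of types, and you have not said which types, which bases, or how the conclusion about the intersection would follow.

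The paper's argument is structurally different and avoids this circularity. It first establishes the \emph{weaker-looking} independence $A_1\indi0_{\a\sse\b}\acl(a)\sse\acl(b)$ by a direct computation: taking a green basis $e$ of $A_1\cap(\acl(a)\sse\acl(b))$ over $\a\sse\b$, writing $e=e_a\cdot e_b$ with $e_a\in\acl(a)$, $e_b\in\acl(b)$, and applying Lemma~\ref{L:stab} to see that $e$ is generic in a coset of a torus, hence $\ld_\Q(e)=\tr(e)$; the predimension constraint then forces $e\in\acl_0(\a,\b)$. The passage from $\acl(a)\sse\acl(b)$ to $A_2,A_3$ is done by a Morley-sequence transfer: since the parameters include $\acl(D)$ for an infinite Morley sequence of the generic, any $0$-algebraic relation between $A_1$ and $A_2,A_3$ already uses only a proper part $D'$ of $D$, and one can send $c$ to an element of $D\setminus D'$ to realise a copy of $A_2\sse A_3$ inside $\acl(a)\sse\acl(b)$ over $A_1$. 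This torus-plus-transfer mechanism is the genuine idea you are missing; nothing in your outline replaces it.
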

\begin{proof}Vérifions d'abord que
$$A_1 \indi 0_{\a\sse\b} \acl(\a)\sse \acl(\b).$$
Comme $\a$ et $\b$ sont indépendants, les structures $\a\sse \b$ et
$\acl(\a)\sse \acl(\b)$ sont autosuffisantes. Puisque
$\acl(\a)\sse \acl(\b)\subseteq\acl(\a,\b)$,  il suffit de montrer que  
$A_1 \cap(\acl(\a)\sse\acl(\b)) = \acl_0(\a, \b)$, par la remarque
\ref{R:ind}. L'ensemble $A_1 \cap(\acl(\a)\sse\acl(\b))$ est
$0$-algébrique sur ses points verts et 
$\a\sse \b$ par la remarque
\ref{R:verts}(\ref{R:verts:algebriquesurverts}). Or, son degré
de transcendance sur $\a\sse \b$ est fini,  
donc une (toute) base verte $e$ de cet ensemble sur $\a\sse\b$ est
finie. Par hypothèse, elle s'écrit comme $e = e_a\cdot e_b$, avec $e_a
\in \acl(\a)$ et $e_b \in \acl(\b)$, tous les deux verts, où le produit 
est pris coordonnée
par coordonnée. Notons que
$$\begin{array}{r@{}lcl@{}}
e&\indi 0_{\a,\b} e_a&\text{car}&A_1\indi 0_{\a}\acl(\a),\\
e&\indi 0_{\a, \b} e_b&\text{car}&A_1\indi 0_{\b}\acl(\b),\mbox{ et }\\
e_a&\indi 0_{\a , \b} e_b&\text{car}&\acl(\a)\indi
0\acl(\b)\mbox{ par
indépendance au sens de $T$}.\end{array}$$
Donc, d'après le lemme \ref{L:stab}, l'élément $e$ est $0$-générique
dans un translaté d'un sous-groupe algébrique connexe multiplicatif. Un
tel sous-groupe de $(K^*)^n$ est un tore, défini par des relations
$\Q$-linéaires. La dimension linéaire d'un point générique coïncide donc avec le degré de transcendance \cite{yM03}.

Notons que 
$$\dd(e/\acl_0(\a, \b))= \dd(A_1
\cap(\acl(\a)\sse\acl(\b))/\acl_0(\a, \b))=0.$$
Ainsi 
$$0=\dd(e/\acl_0(\a,\b))=2\tr(e/\a,\b)-\ld_\Q(e/\acl_0(\a,\b))=\tr(e/\a,\b),$$
et donc $e\in\acl_0(\a, \b)$.
Alors, $$A_1\cap(\acl(\a)\sse\acl(\b)) \subseteq \acl_0(e,\a \sse \b) 
\subseteq\acl_0(\a,\b).$$

Pour terminer, observons que l'on travaille depuis le début de cette partie
au-dessus d'une suite de Morley $D$ du générique de $G$
utilisée pour le fait \ref{F:reduitgroupe}. Une relation
$0$-algébrique entre $A_1$ et $A_2,A_3$ n'utilise qu'une partie infinie 
propre $D'$ de $D$,   c'est-à-dire entre les ensembles $A'_1$ et
$A'_2,A'_3$ correspondants à la même construction au-dessus de la suite de
Morley $D'$.  On peut envoyer $c$ sur un élément de $D\setminus D'$ sur 
$D',a,b$ pour obtenir une copie de $A'_2\sse A'_3$ dans
$\acl(a)\sse \acl(b)$ au-dessus de $A'_1$.
Donc cette relation $0$-algébrique se produit déjà entre 
$A_1$ et $\acl(a)\sse \acl(b)$ et, par transitivité, entre $A_1$ et 
$\a\sse \b$.
\end{proof}

\begin{corollary}\label{C:ind} La structure $A_1\sse A_2\sse A_3$ est
autosuffisante. On a
$$\U(A_1\sse A_2\sse A_3)=\U(A_1)\cdot\U(A_2)\cdot\U(A_3).$$
En outre, ces sous-espaces sont en somme directe au-dessus de
$\U(\a)\cdot\U(\b)\cdot\U(\ca)$.
\end{corollary}
\begin{proof}
Puisque
$$\a\sse\b\subseteq A_1\cap
(A_2\sse A_3)\subseteq A_1\subseteq\acl(\a,\b)$$
sont tous des ensembles autosuffisants, on obtient
$$0\le\dd(A_1/A_1\cap (A_2\sse
A_3))\le\dd(\acl(\a,\b)/\a\sse\b)=0.$$
On a donc égalité~; par sous-modularité et autosuffisance de $A_2\sse
A_3$ on obtient
$$0\le\dd(A_1\sse A_2\sse A_3/A_2\sse A_3)\leq\dd(A_1/A_1\cap (A_2\sse
A_3))=0.$$
Comme la structure  $A_2\sse A_3$ est autosuffisante, on en déduit que 
$A_1\sse A_2\sse A_3$ l'est également.

Puisque $\a\sse\b$ est autosuffisant,
la remarque \ref{R:verts}(\ref{R:verts:pasdenouveauxverts})
donne $\U(\acl_0(\a,\b))=\U(\a\sse\b)$. Enfin, comme
$$\dd(A_1\sse A_2\sse
A_3/\a\sse\b)=\RM(G)=\dd(A_1/\a\sse\b)+\dd(A_2\sse A_3/\a\sse\b)$$
et
$$A_1\indi0_{\a,\b}A_2,A_3\,,$$ on conclut que $$\U(A_1\sse A_2\sse
A_3)=\U(A_1)\cdot\U(A_2 \sse A_3),$$
et que $\U(A_1)$ et $\U(A_2\sse A_3)$ sont en somme directe au-dessus
de $\U(\a\sse\b)=\U(\a)\cdot \U(\b)$. Le fait que $\U(A_2\sse A_3)$ est la 
somme directe
de $\U(A_2)$ et $\U(A_3)$ au-dessus de $\U(\ca)$ nous permet de conclure.
\end{proof}

Le degré de transcendance $\tr(\bar a/a_1)$ est fini d'après le lemme 
\ref{L:a_to_abar}. Considérons alors un uple fini $a_2$ de points verts de $\a$
maximal $0$-transcendant sur $a_1$. Comme $\bar a=\acl_0(\U(\bar a))$, on
a $\a=\acl_0(a_1a_2)=\acl_0(\sscl{a_1a_2})$, car
$\a$ est autosuffisant.  La remarque
\ref{R:verts}$(\ref{R:verts:pasdenouveauxverts})$ donne que $\U(\a) = 
\U(\sscl{a_1a_2})$. 

Nous allons maintenant compléter $a_2$ en une base verte de $\a$ 
sur $a_1$ de la façon la plus transcendante possible.
\begin{lemma}\label{L:t_a_trans}
Il existe un uple vert $t_a$ fini et $0$-transcendant sur $a_1$ qui 
complète $a_2$ en une base
verte de $\a$ sur $a_1$.\end{lemma}
\begin{proof}
Soit $t_a$ complétant $a_2$ en une base verte de $\a$ sur
$a_1$. Comme $a_1$ est autosuffisant, toute sous-structure de $\a$
engendrée sur $a_1$ par $n$ éléments de $a_2,t_a$ a degré de
transcendance au moins $n/2$ sur $a_1$. En particulier, 
$$|a_2| = \tr (\a/a_1) \geq |t_a|.$$
On peut alors transformer linéairement les éléments de $t_a$ avec les
points de $a_2$ pour que $t_a$ soit également $0$-transcendant sur $a_1$.
\end{proof}

\begin{remark}\label{R:base}
Rappelons que $t_a$ est $0$-algébrique sur $a_1a_2$. Nous obtenons
des bases vertes comme ci-dessus pour chaque point du diagramme. Les
indépendances 
$$\acl(b)\indi
0\acl(ab)\quad\text{et} \quad \acl(a)\indi0_{a_1}\acl(b),\acl(ab)$$ 
impliquent que $a_2,b_2,ab_2,t_a,t_b,t_{ab}$ est 
une base linéaire de $\U(\a)\cdot\U(\b)\cdot\U(\ab)$ au-dessus de
$\U(\acl_0(a_1,b_1,ab_1))=\U(\acl_0(a_1,b_1))=\U(a_1)\cdot\U(b_1)$.\end{remark}

\begin{lemma}\label{L:modify_t_a} 
Les uples
$$t'_a=\frac{t_c}{t_{ca}},\quad t'_b=\frac{t_{ca}}{t_{cab}},\quad\mbox{et}\quad
t'_{ab}=\frac{t_c}{t_{cab}}$$
(où la division est coordonnée par coordonnée)
satisfont~:\begin{enumerate}
\item $t'_a\cdot t'_b=t'_{ab}$;
\item $t'_a$ est $0$-transcendant sur $\a,\ca$ et de même sur $\a,\cc$.
\end{enumerate}
\end{lemma}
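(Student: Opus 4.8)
The first point is immediate from the definitions: $t'_a\cdot t'_b = \frac{t_c}{t_{ca}}\cdot\frac{t_{ca}}{t_{cab}} = \frac{t_c}{t_{cab}} = t'_{ab}$, where the products and quotients are taken coordinate by coordinate — so no work is needed there, although one should check the tuples $t_c$, $t_{ca}$, $t_{cab}$ all have the same length, which follows since each is a green base of the corresponding $0$-algebraic closure over the relevant $a_i,b_i,\dots$ and these lengths are governed by the combinatorial data of the quadrangle, which is symmetric under the group action.

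The substance is point (2): showing $t'_a = t_c/t_{ca}$ is $0$-transcendent over $\a,\ca$ (and symmetrically over $\a,\cc$). The plan is to compute transcendence degrees using the additivity established in Lemma~\ref{L:ind} and its analogues. Recall $t_c$ completes $c_2$ to a green base of $\cc$ over $c_1$ and is $0$-transcendent over $c_1$ (Lemma~\ref{L:t_a_trans}), and likewise $t_{ca}$ over $(ca)_1$. First I would record, via the independences $\acl(c)\indi0_{c_1}\acl(a),\acl(ca)$ and $\acl(a)\indi0\acl(ca)$ (independence in the sense of $T$, pushed down to $T_0$ over the $T$-algebraically closed $a_E$-type), that $c_2,ca_2,t_c,t_{ca}$ is a linear basis of $\U(\cc)\cdot\U(\ca)$ over $\U(\acl_0(c_1,ca_1,a_1)) = \U(a_1)\cdot\U(c_1)$ — exactly the analogue of the displayed fact just before Lemma~\ref{L:modify_t_a}, now for the triangle $A_2$ with vertices $\a,\cc,\ca$. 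In particular $t_c,t_{ca}$ are linearly independent over $\acl_0(\a,\ca)$ (whose green points are $\U(\a)\cdot\U(\ca)$ since $\a\sse\ca$ is autosuffisant), and each has half its length as transcendence degree over that field, with the two contributions in direct sum. Hence $\tr(t_c,t_{ca}/\a,\ca) = |t_c|/2 + |t_{ca}|/2$.

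Now the key move: $t'_a = t_c/t_{ca}$ lies in $\acl_0(\a,\ca,t_c)$ and also, multiplying back, $t_c \in \acl_0(\a,\ca,t'_a,t_{ca})$, so $\tr(t'_a,t_{ca}/\a,\ca) = \tr(t_c,t_{ca}/\a,\ca) = |t_c|/2 + |t_{ca}|/2$. Since $|t_c| = |t_{ca}| = |t'_a|$ and $\tr(t_{ca}/\a,\ca) \le |t_{ca}|/2$ while $\tr(t'_a/\a,\ca) \le |t'_a|/2$, the additivity $\tr(t'_a,t_{ca}/\a,\ca) = \tr(t'_a/\a,\ca) + \tr(t_{ca}/t'_a,\a,\ca) \le |t'_a|/2 + |t_{ca}|/2$ forces equality throughout, so $\tr(t'_a/\a,\ca) = |t'_a|/2$, i.e. $t'_a$ is $0$-transcendent over $\a,\ca$. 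The symmetric statement over $\a,\cc$ is obtained the same way, using $t_{ca} \in \acl_0(\a,\cc,t_c,1/t'_a)$ and the role of $t_c$ over $\a,\cc$, or equivalently by running the argument with the triangle $A_2$ read from the $\cc$-side: one checks $t_c,t_{ca}$ are linearly independent over $\acl_0(\a,\cc)$ and that $\tr(t_c,t_{ca}/\a,\cc) = |t_c|/2 + |t_{ca}|/2$, then divides.

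**Main obstacle.** The delicate point is verifying that $t_c$ and $t_{ca}$ remain $0$-transcendent — of maximal transcendence degree $|t_\bullet|/2$ — \emph{over the enlarged base} $\a,\ca$ rather than just over $c_1$, $(ca)_1$ respectively, and that their contributions stay in direct sum over $\acl_0(\a,\ca)$. This is where I would lean on Lemma~\ref{L:ind} (and its $A_2$-analogue for the triangle $\a,\cc,\ca$): the green points of $A_2 = \acl_0(\sscl{\a,\cc,\ca})$ form the direct sum $\U(\a)\oplus_{\U(c_1)\cdot\U(a_1)}\dots$ appropriately, and $t_c,t_{ca}$ are precisely the "new" green directions beyond $\a\sse\ca$; the linear independence and the dimension count then come from $\dd(A_2/\a\sse\ca) = \RM(G)$ together with $\dd = 2\tr - \ld_\Q$ being zero on green-generated extensions. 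Once that bookkeeping is set up the rest is the elementary division argument above.
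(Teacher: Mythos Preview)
Your argument rests on a misreading of ``$0$-transcendant'': throughout the paper this means \emph{algebraically independent in the pure field sense}, i.e.\ $\tr(t/\cdot) = |t|$, not $|t|/2$. The half-the-length phenomenon you invoke arises only for the \emph{full} green base of a $\delta=0$ extension; Lemma~\ref{L:t_a_trans} precisely arranges $t_c$ to be a sub-tuple with $\tr(t_c/c_1)=|t_c|$ (at the cost of $t_c$ being $0$-algebraic over $c_1,c_2$). Your final conclusion $\tr(t'_a/\a,\ca)=|t'_a|/2$ would therefore not establish the lemma even if the computation leading to it were correct.

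The computation itself also fails. Your claim that $t_{ca}$ is linearly independent over $\acl_0(\a,\ca)$ with $\tr(t_{ca}/\a,\ca)=|t_{ca}|/2$ is wrong: by construction $t_{ca}$ completes $ca_2$ to a green base of $\ca$ over $ca_1$, so $t_{ca}\in\U(\ca)\subseteq\acl_0(\a,\ca)$ and $\tr(t_{ca}/\a,\ca)=0$. Likewise the bound $\tr(t'_a/\a,\ca)\le|t'_a|/2$ you impose is unjustified and in fact false --- the goal is precisely to show this transcendence degree equals $|t'_a|$.

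The paper's proof is a two-line observation that bypasses all of this bookkeeping. Since $t_c\indi0_{c_1}\a,\ca$ (coming from $\acl(c)\indi0_{c_1}\acl(a),\acl(ca)$ for the quadrangle) and $t_c$ is $0$-transcendent over $c_1$, the tuple $t_c$ remains $0$-transcendent over $\a,\ca$. As $t_{ca}\in\acl_0(\ca)$, coordinate-wise division by $t_{ca}$ preserves algebraic independence over $\a,\ca$, so $t'_a=t_c/t_{ca}$ is $0$-transcendent over $\a,\ca$. The statement over $\a,\cc$ follows symmetrically from $t_{ca}\indi0_{ca_1}\a,\cc$ and $t_c\in\acl_0(\cc)$.
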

\begin{proof}
L'égalité est évidente. Le fait \ref{F:reduitgroupe}
entraîne que $$t_c\indi0_{c_1}\a,\ca.$$
Comme $t_c$ est $0$-transcendant sur $c_1$, il l'est aussi sur
$\a,\ca$. Puisque
$t_{ca}\in\acl_0(\ca)$, on voit que $t'_a$ est $0$-transcendant sur
$\a,\ca$. \end{proof}

Soit $t_1$ une base linéaire de $\U(A_1)$ sur
$\U(\a)\cdot\U(\b)\cdot\U(\ab)$, et de même pour les autres droites.
Notons que $t_1$ est fini car $\tr(A_1/a_1,b_1)$ l'est. 
Alors $$a_2,b_2,ab_2,t_a,t_b,t_{ab},t_1$$ est une base verte de $A_1$
sur $\acl_0(a_1,b_1)$.
Nous allons transformer ces bases afin qu'elles satisfassent les
conditions du lemme \ref{L:stab}.
\begin{prop}\label{modify} Après une éventuelle transformation
linéaire des uples $t_i$, on a~:\begin{enumerate}
\item $t_1\cdot  t_4 = t_2 \cdot t_3$,
\item $t_i$ et $A_i$ sont $0$-algébriques sur les trois points de la
 droite $A_i$,
\item $|t_2|+|t_c| = |c_2|$ et
$\RM(\cc/c_1)=\dd(\cc/c_1)=|t_2|$.
\item $t_2$ est $0$-transcendant sur $\a,\cc,t'_a$ et sur
$\a,\ca,t'_a$ (et de même pour $t_3$ et $t_4$ sur les points
correspondants), et $t_1$ est $0$-transcendant sur $\a,\ab$.
\item $t_2$ est un uple vert $T$-générique sur $\a$.
\end{enumerate}
Ainsi, les uples $\a$, $\cc$, $t'_a$ et $t_2$, tout comme $\a$, $\ca$,
$t'_a$ et $t_2$, sont $0$-indépendants.\end{prop}
\begin{proof} Par le corollaire \ref{C:ind}, la structure $A_1\sse A_2\sse
A_3$ est autosuffisante et $t_1,t_2,t_3$ est une base linéaire de
$X=\U(A_1\sse A_2\sse A_3)$ sur 
$$Y=\U(\a)\cdot\U(\b)\cdot\U(\cc)\cdot\U(\ab)\cdot\U(\ca)\cdot\U(\cab).$$ 
Comme $A_1\sse A_2\sse A_3$ contient $\ab$, $\cc$ et $\cab$, elle
contient également $\sscl{\ab,\cc,\cab}$. En particulier,
$$t_4\in\U(A_4)=\U(\sscl{\ab,\cc,\cab})\subseteq A_1\sse A_2\sse A_3,$$
donc ils existent $t'_i\in\U(A_i)$ pour $1\le i\le3$, tels que
$t_4=t'_1\cdot t'_2\cdot t'_3$.
Par symétrie, pour tout $i<j\le3$, l'uple $t_i,t_j,t_4$ est aussi une
base linéaire de $X$ sur $Y$. Ceci implique 
que $\ld(t'_i/Y)=\ld(t_i/Y)$ pour chaque $1\le i\le 4$. \`A
transformation linéaire près de $t_1$, $t_2$ et $t_3$, on peut donc
supposer que $t_1 \cdot t_4 = t_2 \cdot t_3$, ce qui montre $(1)$.

Par le lemme \ref{L:preind}, pour tous
$i,j,k\in\{1,2,3,4\}$ distincts, l'uple $t_i$ est $0$-indépendant de
$t_j,t_k$ au-dessus de $\a,\b,\ab,\cab,\cc,\ca$.
Le lemme \ref{L:stab} appliqué à $t_1$, $t_2 \cdot t_3$ et $t_4$
donne que 
$$\tp_0(t_1/\acl_0(\a,\b,\ab,\cab,\cc,\ca))$$
est $0$-générique dans un translaté $V$ définissable
sur $\acl_0(\a,\b,\ab,\cab,\cc,\ca)$ d'un tore de
$(K^*)^{|t_1|}$. Puisque
$$t_1  \indi 0_{\a,\b,\ab} \cab,\cc,\ca,$$
le translaté $V$ est définissable sur $\acl_0(\a,\b,\ab)$. Or, pour
tout translaté d'un tore, sa dimension de Zariski
correspond à la dimension linéaire multiplicative du point générique.
Donc
$$\tr(t_1/\a,\b,\ab) =
\ld_\Q(t_1/\acl_0(\a,\b,\ab))=\dd(t_1/\acl_0(\a,\b,\ab))\le 0,$$
puisque $t_1\in A_1=\acl_0(\sscl{\a,\b,\ab})$. Ainsi $t_1$ est
$0$-algébrique sur $\a,\b,\ab$, et $A_1$ l'est aussi car
$$A_1=\acl_0(\sscl{\a,\b\,\ab})=\acl_0(\a,\b,\ab,t_1)=\acl_0(\a,\b,\ab),$$
ce qui montre $(2)$.

Pour $(3)$, posons $n = \tr(A_2/\a,\ca)$. Du fait que $\dd(A_2/\a\sse
\ca)=0$ et que $\a\sse \ca$ est autosuffisant, la base verte
$c_2,t_2,t_c$  de $A_2$ sur $\a\sse \ca$
est de longueur $2n$. Par $(2)$, on a 
$$A_2 =\acl_0(\cc,\a,\ca)= \acl_0(c_2,\a,\ca).$$ 
Puisque $\cc\indi0_{c_1}\a,\ca$, l'uple  $c_2$ reste
$0$-transcendant sur $\a,\ca$. Un calcul de pré\-di\-men\-sion donne que
$$0= \dd(c_2,t_2,t_c/\a,\ca)
= 2\tr(c_2/\a,\ca)-|c_2|-|t_2|-|t_c|.$$
Ainsi $|t_2|+|t_c| = |c_2|$. Comme $\cc$ est autosuffisant, on en déduit
$$\RM(\cc/c_1)=\dd(\cc/c_1)=2\tr(\cc/c_1)-\ld_\Q(\cc/c_1)
=2|c_2|-|c_2|-|t_c|=|t_2|.$$

Pour $(4)$, supposons  que  les $j-1$ premières 
coordonnées $t_{2,1},\ldots,t_{2,j-1}$ de l'uple $t_2$ sont
$0$-transcendantes sur $\a,\ca,t'_a$ et sur $\a,\cc,t'_a$. 
Rappelons que $c_2$ est $0$-transcendant sur $\a,\ca$ et que $|c_2|=|t_2|+|t_c|
= |t_2| +|t'_a|$. L'une des coordonnées $z$ de $c_2$ est donc 
$0$-transcendante sur $\a,\ca,t'_a,t_{2,1},\ldots,  t_{2,j-1}$. Si
$t_{2,j}$ est $0$-algébrique sur $\a,\ca,t'_a,t_{2,1},\ldots,
t_{2,j-1}$ alors $t_{2,j}z$ est lui $0$-transcendant. De plus, si
$t_{2,j}$ était $0$-transcendant sur 
$\a,\cc,t'_a,t_{2,1},\ldots, t_{2,j-1}$, l'élément $t_{2,j}z$ l'est
toujours.

Afin de conserver l'égalité $t_1\cdot  t_4 = t_2 \cdot t_3$, on multiplie
$t_{4,j}$ par $z$. Notons que si $t_{4,j}$ était $0$-transcendant sur
$\ab,\cc,t'_{ab},t_{4,1},\ldots,t_{4,j-1}$, alors $t_{4,j}z$ l'est toujours.
Enfin, si $t_{4,j}$ était $0$-transcendant sur $\ab,\cab,t'_{ab},t_{4,1},\ldots,
t_{4,j-1}$ mais $t_{4,j}z$ devient $0$-algébrique, alors $t_{4,j}z^2$ est à
nouveau $0$-transcendant, ainsi que $t_{2,j}z^2$ sur les points correspondants
($z^2$ dénote ici le carré de $z$ au sens du corps). 
Le résultat suit par récurrence et symétrie. 

Pour $(5)$, comme $a$ et $c$ sont indépendants, on a 
$$c_2\ind_{c_1} \a.$$
Par $(3)$, l'une des coordonnées $z$ de $c_2$ est
$T$-transcendante sur $\a,t_{2,1},\ldots,t_{2,j-1}$ pour $j \leq
|t_2|$. Si $t_{2,j}$ est algébrique  sur
$\a,t_{2,1},\ldots,t_{2,j-1}$,  l'élément
$t_{2,j}z^n$ est $T$-transcendant sur $\a,t_{2,1},\ldots,t_{2,j-1}$
pour tout $n>0$ 
(où $z^n$ dénote la puissance $n$-ième de $z$ au sens du corps). Comme $t_{2,j}$
est $0$-transcendant sur
$\a,\ca,t'_a,(t_{2,i}:i\not=j)$, il y a
au plus un $n$ pour lequel $t_{2,j}z^n$ est
$0$-algébrique sur $\a,\ca,t'_a,(t_{2,i}:i\not=j)$. De même pour
$t_{4,j}z^n$ au-dessus de $\ab,\cab,t'_{ab},(t_{4,i}:i\not=j)$. Il
existe donc un  $n$ commun permettant de préserver $(1)$ et
$(4)$. On conclut par récurrence.

La dernière remarque suit du lemme \ref{L:modify_t_a} et du point
$(4)$.\end{proof}

\begin{corollary}\label{C:alga2t2} La droite $A_2$ est $0$-algébrique
sur $\a,\cc,t'_a,t_2$. Les points $\cc$ et $\ca$ sont
$0$-interalgébriques sur $\a,t'_a,t_2$.\end{corollary}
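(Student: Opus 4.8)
The plan is to read off both assertions from the data accumulated in Proposition \ref{modify}. For the first assertion, recall that by point $(3)$ of the proposition the tuple $c_2,t_2,t_c$ is a green basis of $A_2$ over $\a\sse\ca$, and by point $(2)$ we have $A_2=\acl_0(\cc,\a,\ca)=\acl_0(c_2,\a,\ca)$. Thus it suffices to see that $c_2$ (equivalently all of $A_2$) is $0$-algebraic over $\a,\cc,t'_a,t_2$. Now $t'_a=t_c/t_{ca}$ with $t_{ca}\in\acl_0(\ca)\subseteq\acl_0(\cc)$ (since $\cc$ determines $\ca$ up to finitely many choices, both being autosuffisant interalgebraic representatives of the group element), so from $t'_a$ and $\cc$ we recover $t_c$ up to $0$-algebraicity. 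Hence $\acl_0(\a,\cc,t'_a,t_2)\supseteq\acl_0(\a,\ca,t_c,t_2)$, which contains the green basis $c_2,t_2,t_c$ together with $\a\sse\ca$, and therefore contains $A_2$.

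For the second assertion, note that once $A_2$ is $0$-algebraic over $\a,\cc,t'_a,t_2$, the point $\ca\subseteq A_2$ is $0$-algebraic over $\a,t'_a,t_2,\cc$. Conversely, by the symmetric form of point $(4)$ — the hypotheses of Proposition \ref{modify} are stated symmetrically in $\cc$ and $\ca$, and $t'_a$ is symmetric in them by Lemma \ref{L:modify_t_a} — the same argument with the r�les of $\cc$ and $\ca$ exchanged (using $t_{cab}\in\acl_0(\cab)$ is not needed here; one only needs that $\cc$ and $\ca$ together with $t'_a$ and $t_2$ generate $A_2$ in either direction) shows that $\cc$ is $0$-algebraic over $\a,t'_a,t_2,\ca$. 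This gives the $0$-interalgebraicity of $\cc$ and $\ca$ over $\a,t'_a,t_2$.

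The main point to get right will be the bookkeeping of which autosuffisant representative ($\cc$, $\ca$, or the underlying group element) carries which of the green tuples $t_c$, $t_{ca}$, and how $t'_a$ mediates between them: everything hinges on $t_{ca}\in\acl_0(\ca)$ and on $\acl_0(\ca)$ and $\acl_0(\cc)$ being interalgebraic, so that passing from $t'_a$ to $t_c$ costs nothing over either $\cc$ or $\ca$. Once that is checked, both statements are immediate consequences of $(2)$ and $(3)$ of Proposition \ref{modify} together with the predimension computation $\dd(A_2/\a\sse\ca)=0$ that forces the listed tuple to be a green basis.
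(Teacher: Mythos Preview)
Your argument for the first assertion contains a genuine error. You claim that $t_{ca}\in\acl_0(\ca)\subseteq\acl_0(\cc)$ on the grounds that ``$\cc$ determines $\ca$ up to finitely many choices, both being autosuffisant interalgebraic representatives of the group element''. But $\cc$ and $\ca$ are \emph{not} representatives of the same group element: $\cc$ is interalgebraic with the generic $c$, while $\ca$ is interalgebraic with the generic $ca$. Since $a$ and $c$ are independent generics of $G$, so are $c$ and $ca$; hence $\cc$ and $\ca$ are $T$-independent, and in particular $0$-independent. The inclusion $\acl_0(\ca)\subseteq\acl_0(\cc)$ is therefore false, and the chain $\acl_0(\a,\cc,t'_a,t_2)\supseteq\acl_0(\a,\ca,t_c,t_2)$ does not follow. (Indeed, if $\ca$ were already $0$-algebraic over $\cc$, the second assertion of the corollary would be vacuous.)

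The paper's proof proceeds instead by a transcendence-degree count. From Proposition~\ref{modify}\,(2) one has $A_2=\acl_0(\a,\cc,\ca)$; together with the independence $\ca\indi0_{ca_1}\a,\cc$ and $\ca=\acl_0(ca_1,ca_2)$ this gives $\tr(A_2/\a,\cc)=|ca_2|$. On the other hand, $(t'_a,t_2)$ lies in $A_2$ and is $0$-transcendent over $\a,\cc$ (this is exactly what point~(4) of Proposition~\ref{modify} together with Lemma~\ref{L:modify_t_a} was arranged to guarantee), with $|t'_a|+|t_2|=|t_c|+|t_2|=|c_2|=|ca_2|$ by point~(3). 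Equality of transcendence degrees then forces $A_2=\acl_0(\a,\cc,t'_a,t_2)$. Your treatment of the second assertion (deduce $\ca\in\acl_0(\a,\cc,t'_a,t_2)$ from the first, then invoke the symmetry in $\cc$ and $\ca$ built into point~(4)) is correct and matches the paper, but it rests on the first assertion, which you have not established.
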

\begin{proof} D'après la proposition \ref{modify}, la droite
$A_2$ est $0$-algébrique sur $\a,\cc,\ca$
et donc sur $\a,\cc,ca_2$, car $\ca =\acl_0(ca_1,ca_2)$ et $ca_1 \in
\acl_0(a_1,c_1)$. De plus, l'uple $t'_a,t_2$ est $0$-transcendant sur $\a,\cc$. 
\`A partir du lemme \ref{L:modify_t_a} et de la proposition
\ref{modify}, on obtient
$$\tr(A_2/\a,\cc)=\tr(ca_2/\a,\cc)=|ca_2|=|t'_a|+|t_2|=\tr(t'_a,t_2/\a
,\cc),$$
d'où $A_2 =\acl_0(\a,\cc,t'_a,t_2)$. En particulier, on a
$\ca\in \acl_0(\a,\cc,t'_a,t_2)$ et, par symétrie,
$\cc\in\acl_0(\a,\ca,t'_a,t_2)$.\end{proof}

On pose
$$\begin{aligned}\alpha&=\acl_0(\cc,\ca)\cap\acl_0(\a,t'_a,t_2),\\
\beta&=\acl_0(\ca,\cab)\cap\acl_0(\b,t'_b,t_3)\mbox{ et}\\
\gamma&=\acl_0(\cab,\cc)\cap\acl_0(\ab,t'_{ab},t_4).\end{aligned}$$
Remarquons que $a_1,t'_a\in\alpha$ et $b_1,t'_b\in\beta$, ainsi que
$ab_1,t'_{ab}\in\gamma$.

\begin{prop}\label{P:0gp}
Le diagramme
\vskip4mm
\begin{center}
\begin{picture}(200,100)
\put(0,0){\line(1,1){100}}
\put(200,0){\line(-1,1){100}}
\put(0,0){\line(5,2){143}}
\put(200,0){\line(-5,2){143}}
\put(-9,-2){$\gamma$}
\put(203,-2){$\ca$}
\put(97,104){$\alpha$}
\put(47,55){$\beta$}
\put(150,55){$\cc$}
\put(95,45){$\cab$}
\put(-2,-2){$\bullet$}
\put(197,-2){$\bullet$}
\put(97,98){$\bullet$}
\put(98,38){$\bullet$}
\put(54,55){$\bullet$}
\put(141,55){$\bullet$}
\end{picture}
\end{center}
est un quadrangle $0$-algébrique. De plus,
$$\cc,\ca \indi 0_{\alpha} \a,t'_a,t_2.$$
L'élément $\alpha$ et la $0$-base canonique $\cb_0(\cc,\ca/\a,t'_a,t_2)$ sont
donc $0$-interalgébriques.
\end{prop}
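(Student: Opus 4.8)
Il s'agit de vérifier que les six sommets $\alpha$, $\beta$, $\cc$, $\ca$, $\cab$, $\gamma$ forment un quadrangle $0$-algébrique, c'est-à-dire que chacune des quatre droites est $0$-algébrique sur ses trois sommets, et que deux droites distinctes se rencontrent exactement aux sommets communs, avec les indépendances appropriées. L'idée directrice est que nous avons remplacé, coordonnée par coordonnée, les sommets transcendants $\a$, $\b$, $\ab$ du quadrangle précédent par les «coordonnées de jauge» $\alpha$, $\beta$, $\gamma$ obtenues en intersectant la clôture $0$-algébrique d'un côté $0$-algébrique (les droites $A_i$ sont devenues $0$-algébriques par la proposition \ref{modify}(2), via le corollaire \ref{C:alga2t2}) avec une structure contenant le «bon» paramètre transcendant $t_2$, $t_3$, $t_4$ déterminé par la proposition \ref{modify}. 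Le point crucial est que la proposition \ref{modify}(4)--(5) et le corollaire \ref{C:alga2t2} garantissent précisément les $0$-indépendances énoncées juste après la proposition \ref{modify}.

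Concrètement, voici les étapes. D'abord, pour chacune des quatre droites on identifie les trois sommets et on montre la $0$-algébricité de la droite sur ceux-ci : pour la droite «$A_2$» reliant $\alpha$, $\cc$, $\ca$, on utilise le corollaire \ref{C:alga2t2}, qui donne $A_2=\acl_0(\a,\cc,t'_a,t_2)$ ; comme $\alpha$ contient $a_1,t'_a$ et est $0$-algébrique sur $\cc,\ca$, et comme $t_2\in A_2$, on obtient $A_2=\acl_0(\alpha,\cc,\ca)$ en vérifiant les degrés de transcendance (le point $\a$ est récupéré car $a_1a_2\subseteq\a$ et $a_2$ est parmi les points verts déjà présents ; plus soigneusement, $\a=\acl_0(\sscl{a_1a_2})$ et $a_2$ est $0$-algébrique sur $\cc,\ca,t'_a,t_2$ par \ref{C:alga2t2}). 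On procède symétriquement pour les droites «$A_3$» ($\beta$, $\ca$, $\cab$) et «$A_4$» ($\gamma$, $\cc$, $\cab$). Pour la droite «$A_1$» reliant $\alpha$, $\beta$, $\gamma$, on observe que $A_1=\acl_0(\sscl{\a,\b,\ab})=\acl_0(\a,\b,\ab)$ par \ref{modify}(2), et que $\alpha,\beta,\gamma$ contiennent respectivement $a_1,t'_a$, $b_1,t'_b$, $ab_1,t'_{ab}$ ainsi que $t_2,t_3,t_4$ ; un calcul de degré de transcendance utilisant $t_1\cdot t_4=t_2\cdot t_3$ (donc $t_1$ est déjà déterminé) et le fait que $a_2,b_2,ab_2$ s'obtiennent à partir des coordonnées vertes disponibles montre $A_1=\acl_0(\alpha,\beta,\gamma)$.

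Ensuite, il faut établir les relations d'intersection et d'indépendance du quadrangle. Pour deux droites se coupant en un sommet $s$ de la nouvelle liste, on montre que leur intersection est exactement $\acl_0(s)$ et qu'elles sont $0$-indépendantes au-dessus de $s$ ; ceci se déduit des $0$-indépendances correspondantes du quadrangle $0$-algébrique intermédiaire (les droites $A_i$, qui sont maintenant $0$-algébriques) via le lemme \ref{L:preind} et le lemme \ref{L:ind}, en traçant comment $\alpha$, $\beta$, $\gamma$ sont définis comme intersections. Par exemple, l'indépendance $A_1\indi0_{\a,\b}A_2,A_3$ du lemme \ref{L:preind}, combinée au fait que $\alpha\subseteq\acl_0(\a,t'_a,t_2)$ et $\beta\subseteq\acl_0(\b,t'_b,t_3)$ et à la remarque finale de la proposition \ref{modify} (les uples $\a,\ca,t'_a,t_2$ et $\a,\cc,t'_a,t_2$ sont $0$-indépendants), donne $A_1\indi0_{\alpha,\beta}A_2,A_3$. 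Les sommets «conservés» $\cc$, $\ca$, $\cab$ gardent leurs propriétés d'interalgébricité du quadrangle $G$ transportées via le corollaire \ref{C:alga2t2}.

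L'obstacle principal sera le contrôle fin des degrés de transcendance assurant que chaque nouveau sommet $\alpha$ (resp. $\beta$, $\gamma$) est « de la bonne taille » : il faut vérifier que $\alpha$ capture exactement la partie transcendante partagée, ni plus ni moins, pour que les quatre droites se recollent en un véritable quadrangle (et non un objet dégénéré). C'est là qu'interviennent de manière essentielle les conditions de transcendance croisée de \ref{modify}(4)--(5) — notamment que $t_2$ reste $T$-générique sur $\a$ et $0$-transcendant sur $\a,\cc,t'_a$ comme sur $\a,\ca,t'_a$ — qui garantissent que les intersections définissant $\alpha$, $\beta$, $\gamma$ ne « fuient » pas dans des parties transcendantes parasites. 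Une fois ces calculs de dimension menés à bien, la vérification des axiomes du quadrangle est formelle.
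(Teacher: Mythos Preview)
Your plan contains a concrete error that makes the strategy unworkable. You write that one obtains $A_2=\acl_0(\alpha,\cc,\ca)$; but by definition $\alpha\subseteq\acl_0(\cc,\ca)$, so $\acl_0(\alpha,\cc,\ca)=\acl_0(\cc,\ca)$, while $A_2=\acl_0(\a,\cc,\ca)$ strictly contains $\acl_0(\cc,\ca)$ (indeed $a_2$ is $0$-transcendent over $\cc,\ca$). Likewise, your claim that $t_2\in\alpha$ is false: $t_2$ was chosen as part of a linear basis of $\U(A_2)$ over $\U(\a)\cdot\U(\cc)\cdot\U(\ca)$, hence $t_2\notin\U(\cc)\cdot\U(\ca)=\U(\acl_0(\cc,\ca))$ (the last equality since $\cc\sse\ca$ is autosuffisant), so $t_2\notin\acl_0(\cc,\ca)\supseteq\alpha$. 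The same reasoning shows $a_2\notin\alpha$. In short, $\alpha,\beta,\gamma$ are genuinely \emph{smaller} than $\a,\b,\ab$ in the $0$-algebraic sense; one cannot recover $A_1$ or $A_2$ from the new vertices, and your ``degree-of-transcendence bookkeeping'' cannot close this gap.

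The paper proceeds quite differently. It does not attempt to express the $A_i$ in terms of the new points; instead it first proves a single key independence
\[
A_2\ \indi0_{\,\alpha}\ \b,t'_b,t_3,\ab,t'_{ab},t_4\qquad(\star),
\]
by computing the canonical base $\cb_0(\b,t'_b,t_3,\ab,t'_{ab},t_4/A_2)$ and showing it lies both in $\acl_0(\cc,\ca)$ (via $A_3,A_4\indi0_{\cc,\ca}A_2$ from lemme~\ref{L:preind}) and in $\acl_0(\a,t'_a,t_2)$ (via a chain of transitivity arguments using $t_1\cdot t_4=t_2\cdot t_3$ and $t'_a\cdot t'_b=t'_{ab}$). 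From $(\star)$ one gets immediately $\cc,\ca\indi0_\alpha\beta,\gamma$. The interalgebraicity $\ca\in\acl_0(\alpha,\cc)$ is then obtained \emph{indirectly}: corollaire~\ref{C:alga2t2} applied to $A_3$ and $A_4$ gives $\ca\in\acl_0(\b,t'_b,t_3,\ab,\cc,t'_{ab},t_4)$, and $(\star)$ forces $\ca\in\acl_0(\alpha,\cc)$. The remaining quadrangle axioms (pairwise $0$-independence, $\gamma\in\acl_0(\alpha,\beta)$, etc.) follow from $(\star)$ and the $0$-independences of proposition~\ref{modify}. The independence $(\star)$ is the missing idea in your outline; without it there is no mechanism to pass from $\acl_0(\a,t'_a,t_2,\cc)$ down to $\acl_0(\alpha,\cc)$.
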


\begin{proof}  Montrons d'abord que  
\begin{equation}\tag{$\star$}
A_2\indi 0_\alpha\b,t'_b,t_3,\ab,t'_{ab},t_4.
\end{equation}
Le lemme \ref{L:preind} donne l'indépendance $A_3, A_4 \indi 0_{\cc,\ca}
A_2$. Celle-ci entraîne 
$$\b,t'_b,t_3,\ab,t'_{ab},t_4 \indi 0_{\cc,\ca} A_2$$ et donc
$\cb_0(\b,t'_b,t_3,\ab,t'_{ab},t_4 /A_2) \in \acl_0(\cc,\ca)$.

Il reste à montrer que $\cb_0(\b,t'_b,t_3,\ab,t'_{ab},t_4 /A_2) \in
\acl_0(\a,t'_a,t_2)$, c'est-à-dire
$$\b,t'_b,t_3,\ab,t'_{ab},t_4 \indi 0_{\a,t'_a,t_2} A_2.$$
Comme $A_3\indi 0_{\b} A_1$, on a $\ca,t'_b,t_3 \indi 0_{\b} A_1$.
Puisque
$\b$, $\ca$ et $t'_b,t_3$ sont $0$-indé\-pen\-dants par la
proposition \ref{modify},  il suit que $\ca,t'_b,t_3 
\indi 0 A_1$ et
\begin{equation}\tag{\dag}\ca \indi 0_{A_1} t'_b,t_3.\end{equation}
L'indépendance $A_3  \indi 0_{\b,\ca} A_1, A_2$ entraîne $t'_b,t_3 
\indi 0_{A_1,\ca}\cc,t'_a,t_2$~; cette indépendance et (\dag)
donnent par transitivité
$t'_b,t_3 \indi 0_{A_1} \cc, t'_a,t_2$ et donc
$$t'_b,t_3 \indi 0_{A_1,t'_a,t_2}\cc.$$ 
Comme $t_1\cdot t_4= t_2\cdot t_3$ et $t'_a\cdot t'_b=t'_{ab}$, on a
\begin{equation}\tag{\ddag}
t'_{ab},t_4 \indi 0_{A_1,t'_a,t_2}\cc.
\end{equation}
Ensuite $A_2\indi 0_{\a} A_1$ implique $\cc\indi 0_{\a,t'_a,t_2}
A_1$, ce qui avec (\ddag) entraîne 
$$\cc\indi 0_{\a,t'_a,t_2} A_1,t'_{ab},t_4.$$
Comme $A_2\subseteq\acl_0(\a,\cc,t'_a,t_2)$ d'après le corollaire
\ref{C:alga2t2},
$$A_2\indi 0_{\a,t'_a,t_2}A_1,t'_{ab},t_4.$$ 
\`A nouveau les identités $t_1\cdot t_4= t_2\cdot t_3$ et $t'_a\cdot
t'_b=t'_{ab}$ donnent 
$$\b,t'_b,t_3,\ab,t'_{ab},t_4 \indi 0_{\a,t'_a,t_2} A_2,$$
ce qui entraîne ($\star$).

Nous allons maintenant vérifier que les points et les droites du
diagramme satisfont les relations d'un quadrangle $0$-algébrique.

L'indépendance ($\star$) entraîne
$$\cc,\ca\indi 0_{\alpha}\b,t'_b,t_3,\ab,t'_{ab},t_4,$$
ce qui implique 
\begin{equation}\tag{$\sharp$}\cc,\ca\indi0_\alpha \beta,\gamma.\end{equation}
Alors, les droites $(\alpha, \beta, \gamma)$ et $(\alpha, \cc, \ca)$
sont $0$-indépendants sur leur intersection $\alpha$.
Par symétrie, chaque autre droite est $0$-indépendante de $(\alpha,
\beta,\gamma)$ sur leur intersection. La $0$-indépendance de deux
droites différentes de $(\alpha,\beta,\gamma)$ sur leur intersection
découle de l'indépendance des droites $A_i$.

Par la proposition \ref{modify}, $$\a,t'_a,t_2\indi0\cc \quad
\text{et} \quad \a,t'_a,t_2\indi0\ca.$$ Ainsi
$\alpha$, $\cc$ et $\ca$ sont deux à deux $0$-indépendants. D'après le
corollaire \ref{C:alga2t2} appliqué aux droites $A_3$ et
$A_4$, on a $\ca\in\acl_0(\b,\cab,t'_b,t_3)$ et
$\cab\in\acl_0(\ab,\cc,t'_{ab},t_4)$, ce qui implique
$\ca\in\acl_0(\b,t'_b,t_3,\ab,\cc,t'_{ab},t_4)$. Par ($\star$),
on conclut $$\ca\in\acl_0(\alpha,\cc).$$ 
On a de même $\cc \in\acl_0(\alpha,\ca)$. Donc chaque point de la
droite $(\alpha, \cc, \ca)$ est
$0$-al\-gé\-brique sur les deux autres. 

Par symétrie, on obtient les mêmes propriétés pour les droites
$(\beta, \cab, \ca)$ et $(\gamma, \cab, \cc)$.

Pour la droite $(\alpha, \beta,\gamma)$, l'indépendance 
$A_2\indi0_{\ca} A_3$ implique $\alpha\indi0_{\ca}\beta$. Comme
$\alpha\indi0\ca$ on a $\alpha\indi0\beta$. Puisque $\cab \in
\acl_0(\beta,\ca)$, l'indépendance $(\sharp)$ implique
$$\gamma\indi0_{\alpha,\beta}\cc,\ca,\cab,$$
et donc $\gamma\in\acl_0(\alpha,\beta)$. Par symétrie, les points
$\alpha$, $\beta$ et $\gamma$ sont deux-à-deux $0$-indépendants et
chacun est $0$-algébrique sur les deux autres.

Il s'agit bien d'un quadrangle $0$-algébrique.

Pour la dernière affirmation, notons que  l'uple $\alpha$ 
est dans $\acl_0(\a,t'_a,t_2)$, par définition. Or, la proposition
\ref{modify} entraîne  $\cc\indi0\a,t'_a,t_2,\alpha$, ce qui donne $\cc 
\indi 0_{\alpha} \a, t'_a,t_2$, d'où 
$$\cc,\ca \indi 0_{\alpha} \a,t'_a,t_2\,,$$
car $\ca\in\acl_0(\cc,\alpha)$.
\end{proof}

Le théorème de la configuration de groupe donne, sur des para\-mètres
$B=\acl(B)$ indépendants de $a,b,c$, un groupe $T_0$-$*$-définissable
$0$-connexe $H$, et des éléments
$0$-génériques $0$-indépendants $h$ et $h'$ dans $H$ sur $B$, tels que
$\alpha$ est $0$-interalgébrique avec $h$ sur $B$, ainsi que $\beta$
avec $h'$ et $\gamma$ avec $hh'$. Notons que les éléments $h$ et $h'$
ne sont pas forcément indépendants et que leur $T$-type n'est pas
générique dans $H$ sur $B$.

Comme $B$ est indépendant de $\acl(a,b,c)$, toutes les indépendances
et $0$-in\-dé\-pen\-dan\-ces obtenues à ce point sont préservées par
l'adjonction de $B$ à la base.

\begin{corollary}\label{C:a_et_alpha}
Les uples $h$ et $(t'_a,t_2)$ sont $0$-interalgébriques sur $\a, B$.
\end{corollary}
\begin{proof} Par l'interalgébricité de $\alpha$ et $h$ sur $B$, il
suffit de le démontrer pour $\alpha$ à la place de $h$. Notons
d'abord que l'uple $\alpha$ est dans $\acl_0(\a,t'_a,t_2)$ par
définition. Réciproquement, la proposition précédente donne 
$$\cc,\ca\indi 0_{\alpha}\a,t'_a,t_2,$$
d'où $\cc,\ca \indi 0_{\a,\alpha} t'_a,t_2.$
Comme $t'_a,t_2\in\acl_0(\a,\cc,\ca)$, on conclut que
$$t'_a,t_2\in\acl_0(\a,\alpha).\qedhere$$\end{proof}
Pour la suite, on pose $r=|t'_a|$ et $s=|t_2|$.
\begin{prop}\label{P:S_isog} 
Soit $S = \St_0(h,t'_a,t_2/\a,B)$ le $0$-stabilisateur de $(h,t'_a,t_2)$ 
dans $H \times(K^*)^{r+s}$ et $N$ sa projection sur $H$. 
Alors $S$ et $N$ sont $0$-connexes
et $T_0$-définissables sur $B$, et $(h,t'_a,t_2)$ est
$0$-générique sur $\a, B$ dans le translaté $S\cdot(h,t'_a,t_2)$, qui
est aussi $T_0$-définissable sur $\acl_0(\a,B)$. Le
sous-groupe $N$ est central dans $H$, et $S$ induit une isogénie entre
$N$ et $(K^*)^{r+s}$.

De plus, l'uple $a_1$ est $0$-algébrique sur $B$ et le paramètre canonique $\ulcorner
Nh\urcorner$ du translaté $Nh$.\end{prop}
\begin{proof} Puisque $h\in\acl_0(A_2,B)$, l'indépendance
$A_2\indi0_{\a,B}A_1$ implique 
\begin{equation}\tag{\dag}h,t'_a,t_2\indi0_{\a,B} \b,\ab\,,\end{equation}
et  $S=\St_0(h,t'_a,t_2/\a,\b,\ab,B)$.
 
Vérifions d'abord que 
$$(h,t'_a,t_2),\quad (h',t'_b,t_3)\quad\text{et}\quad (hh',t'_{ab},t_1\cdot 
t_4)=(h,t'_a,t_2)\cdot(h',t'_b,t_3)$$
sont deux-à-deux $0$-indépendants au-dessus de $\a,\b,\ab,B$.

L'indépendance $A_2 \indi 0_{\a,B} A_1$ donne
$t'_a,t_2\indi0_{\a,\ca,B}\b,\ab$. Comme $t'_a,t_2 \indi 0_{\a,B} \ca$ par la
proposition \ref{modify}, on a que 
$t'_a,t_2\indi0_{\a,B}\b,\ab,\ca$. Puisque $h$ est
$0$-algébrique sur $\a,t'_a,t_2,B$, on obtient 
$$h,t'_a,t_2\indi 0_{\a,\b,\ab,B}\ca.$$
Enfin, l'indépendance $A_1 A_2 \indi 0_{\b,\ca,B} A_3$ implique
$$h,t'_a,t_2\indi 0_{\a,\b,\ab,\ca,B}h',t'_b,t_3\,,$$
et donc par transitivité
$$h,t'_a,t_2\indi0_{\a,\b,\ab,B}h',t'_b,t_3.$$
Par symétrie, puisque $t_1\in\acl_0(\a,\b,\ab)$, on obtient également
$$hh',t'_{ab},t_1\cdot t_4  \indi 0_{\a,\b,\ab,B}
h',t'_b,t_3\quad\text{et}\quad hh',t'_{ab},t_1\cdot t_4 \indi
0_{\a,\b,\ab,B} h,t'_a,t_2.$$
D'après le lemme \ref{L:stab}, le groupe $S$ est $0$-connexe, et le point
$(h,t'_a,t_2)$ est $0$-générique sur $\acl_0(\a,\b,\ab,B)$ dans le 
translaté $S\cdot(h,t'_a,t_2)$, qui est
$T_0$-définissable sur $\acl_0(\a,\b,\ab,B)$. Notons que $N$ doit
être aussi $0$-connexe. 

Puisque $S=\St_0(h,t'_a,t_2/\a,B)$, il est $T_0$-définissable sur
$\acl_0(\a,B)$. Le translaté $S\cdot(h,t'_a,t_2)$ est
$T_0$-définissable sur 
$$\acl_0(h,t'_a,t_2,\a,B)\cap\acl_0(\a,\b,\ab,B)=\acl_0(\a,B),$$
par l'indépendance (\dag). En particulier, sa projection $Nh$ est
également $T_0$-définissable sur $\acl_0(\a,B)$, et $\ulcorner
Nh\urcorner\in\acl_0(\a,B)$.

D'après le lemme \ref{L:stab}, on a aussi 
$$S=\St_0(hh',t'_{ab},t_1\cdot t_4/\a,\b,\ab,B).$$
Or, l'indépendance $A_4\ind_{\ab,B} A_1$ et  $hh'\in\acl_0(A_4,B)$
impliquent
$$hh',t'_{ab},t_4\indi0_{\ab,t_1,B}\a,\b\,;$$
donc $S$ est $T_0$-définissable sur $\acl_0(\ab,t_1,B)$.
Comme $\a$, $\ab$ et $t_1$ sont $0$-indépendants sur $B$
d'après la proposition \ref{modify}, on a
$\acl_0(\a,B)\cap\acl_0(\ab,t_1,B)=B$. On conclut que $S$, ainsi que
$N$, sont $T_0$-définissables sur $B$.

Puisque $h$ est $0$-générique dans $H$ sur $B$, il est $0$-générique dans le
translaté $Nh$ sur $\acl_0(\ulcorner Nh\urcorner, B)$.
Notons que $$a_1\in\a\cap\alpha\subseteq\a\cap\acl_0(h,B).$$
Soit $(n,t',t)$ un $0$-générique de $S$ sur $B,\a,h$. Alors
$n\in\St_0(h/\a,B)$, d'où 
$$nh\equiv^0_{\acl_0(\a,B)}h$$
et $a_1\in\acl_0(nh,B)$.
Comme $n$ est $0$-générique dans $N$ sur $B,\a,h$, on a $nh\indi0_{\ulcorner
Nh\urcorner,B} h$. Alors 
$$a_1\in \acl_0(nh,B)\cap\acl_0(h,B) =\acl_0(\ulcorner
Nh\urcorner,B).$$
De plus, l'élément $h$ est $0$-générique dans $Nh$ sur
$\a,B$.

D'après la proposition \ref{modify}, la paire $(t'_a,t_2)$ est
$0$-générique sur $\a,B$.
La $0$-interal\-gé\-bricité entre $h$ et $(t'_a,t_2)$ sur $\a,B$ entraîne,
par le lemme \ref{F:iso}, que le stabilisateur $S$ est une
iso\-génie $T_0$-définissable sur $B$ entre $N$ et $(K^*)^{r+s}$. En
particulier, le sous-groupe $N$ est l'enveloppe $T_0$-définissable de
sa torsion, et sa $n$-torsion est finie pour chaque $n$, car c'est le cas de
$(K^*)^{r+s}$ et $N$ est $0$-connexe. Pour montrer que $N$ est
central dans $H$, il suffit donc de vérifier que $N$ est normal dans $H$~: 
ainsi sa $n$-torsion est $H$-invariante pour tout $n$~; comme elle est finie et
$H$ est connexe, elle est centrale.

On considère le $0$-stabilisateur $S_1=\St_0(h',t'_b,t_3/\a,\b,\ab,B)$ et sa
projection $N_1$ sur $H$. Par symétrie, les groupes $S_1$ et $N_1$ sont
$T_0$-définissables sur $\acl_0(\b,B)$. D'après le lemme \ref{L:stab} on a
$N_1=h^{-1}Nh$, donc $N_1$ est également $T_0$-définissable sur
$\acl_0(h,B)$. Comme $h\indi0_B\b$, le groupe $N_1$ est $T_0$-définissable sur
$B$, et $h^{-1}Nh$ ne dépend pas du $0$-générique $h$ de $H$ sur $B$. Si $h_1$
est un deuxième $0$-générique de $H$ sur $B$ indépendant de $h$, on a
$h_1^{-1}Nh_1=h^{-1}Nh$. Ainsi $hh_1^{-1}$ est un $0$-générique de $H$ sur $B$
qui normalise $N$~; par connexité tout $H$ normalise $N$.
\end{proof}

Le lemme \ref{L:stab} appliqué aux uples $(h,t'_a,t_2)$, $(h',t'_b,t_3\inv{t_1})$ et $(hh',t'_{ab},t_4)$ sur $\acl_0(\a,\b,\ab,B)$, qui contient $t_1$, entraîne également le résultat suivant~:
\begin{remark}\label{R:S_isog}
On a $$S = \St_0(h,t'_a,t_2/\a,B) = \St_0(hh',t'_{ab},t_4/\ab, B)$$
et le translaté $S\cdot(hh',t'_{ab},t_4)$ est $T_0$-définissable sur
$\acl_0(\ab, B)$.
\end{remark}

Avec les notations précédentes, l'uple $t_2$ est un uple vert générique 
de $(K^*)^s$ sur $\a$, par la proposition \ref{modify}. Or, quoique
 l'uple $(t_2,t'_a)$ est $0$-générique dans $(K^*)^{r+s}$ sur
$\a, B$, il ne l'est pas forcement au sens de $T$.
Cependant, nous allons modifier $h$ pour supprimer $t'_a$.

Par connexité de $K^*$, l'isogénie $S$ est surjective. Il y a donc
$h_0$ et $h'_0$ dans $N$ avec $(h_0, t'_a,\bar1)$ et
$(h'_0,t'_b,\bar1)$ dans $S$, et leur produit $(h_0h'_0,t'_{ab},\bar1)$ est
également dans $S$. Posons $k=h_0^{-1}h$ et $k'=h_0^{\prime-1}h'$.
\begin{lemma}\label{L:kinteralgt2} On a
$$\acl_0(\cc,\ca,B)\cap\acl_0(\a,t_2,B)=\acl_0(k,B)\quad\text{et}\quad
\cc,\ca\indi0_{k,B}\a,t_2\,.$$  
Les uples $k$ et $t_2$ sont $0$-interalgébriques sur
$\a,B$, et $a$ est algébrique sur~$k,B$.
\end{lemma}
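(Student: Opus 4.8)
The idea is to transfer the properties of $h$ established in Lemmes \ref{L:a_et_alpha}, \ref{L:translate}, \ref{L:0def} and the discussion that follows down to $k=h_0^{-1}h$, using that $k$ differs from $h$ only by the factor $h_0\in N\le Z(H)$, which is $0$-algebraic over $t'_a$ and hence over $\a$.

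\textbf{First}, I would prove the inclusion $\acl_0(\cc,\ca)\cap\acl_0(\a,t_2)\subseteq\acl_0(k)$. We already know $k\in\acl_0(\cc,\ca)$ and $k\in\acl_0(\a,t_2)$, so $\acl_0(k)$ is contained in that intersection. For the reverse inclusion, I would argue via a canonical-base / independence computation: from Lemme \ref{L:a_et_alpha} we have $\cc,\ca\indi0_h\a,t'_a,t_2$, and since $h_0\in\acl_0(t'_a)\subseteq\acl_0(\a,t'_a,t_2)$ this gives $\cc,\ca\indi0_{h,h_0}\a,t'_a,t_2$, hence $\cc,\ca\indi0_k\a,t'_a,t_2$ (as $k$ and $h_0$ together are interalgebraic with $h$ and $h_0$). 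In particular $\cc,\ca\indi0_k\a,t_2$, which is the stated independence. Now any $e\in\acl_0(\cc,\ca)\cap\acl_0(\a,t_2)$ satisfies $e\in\acl_0(\a,t_2)$ and $e\indi0_k\a,t_2$ (by the independence just proved, since $e\in\acl_0(\cc,\ca)$), so $e\in\acl_0(k)$. This yields the first displayed equality.

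\textbf{Next}, for the $0$-interalgebraicity of $k$ and $t_2$ over $\a$: since $k\in\acl_0(\a,t_2)$, one direction is immediate. Conversely, recall from Lemme \ref{L:a_et_alpha} that $h$ and $(t'_a,t_2)$ are $0$-interalgebraic over $\a$; combined with $h_0\in\acl_0(t'_a)\subseteq\acl_0(\a,t'_a)$ this shows $k$ and $(t'_a,t_2)$ are $0$-interalgebraic over $\a$. It remains to remove $t'_a$: we must see $t'_a\in\acl_0(\a,k,t_2)$, equivalently $t'_a\in\acl_0(\a,k)$ since $k\in\acl_0(\a,t_2)$. But $S=\St_0(h,t'_a,t_2/\a)$ is an isogeny onto $(K^*)^{r+s}$ (shown before Lemme \ref{L:0def}), so in the translate $S\cdot(k,\bar1,t_2)$ the point $(k,\bar1,t_2)$ is $0$-generic over $\a$; projecting to the relevant coordinates, $t'_a$ is recovered $0$-algebraically from $k$ (via the isogeny $S$, whose kernel is finite) together with $\a$. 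More directly: $t'_a\in\acl_0(h)=\acl_0(h_0 k)$ and $h_0\in\acl_0(t'_a)$, so $t'_a$ and $h_0$ are $0$-interalgebraic, whence $t'_a\in\acl_0(k,t'_a)$ forces — after noting $h\in\acl_0(k,t'_a)$ and $h\in\acl_0(\a,t'_a,t_2)$ with $\cc,\ca\indi0_h\a$ — that $t'_a\in\acl_0(\a,k)$. Thus $t_2$, being $0$-interalgebraic with $(t'_a,t_2)$ over $\a$ hence with $(\a,k)$-data, lies in $\acl_0(\a,k)$, giving interalgebraicity.

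\textbf{Finally}, that $(\a,t_2)$ is $0$-algebraic over $k$: from the first equality $\acl_0(k)=\acl_0(\cc,\ca)\cap\acl_0(\a,t_2)$ and a dimension count. By Lemme \ref{L:a_et_alpha}, $\a$ and $t_2$ are algebraic over $h$, and via the isogeny $N\cong_{T_0}(K^*)^{r+s}$ together with $h_0\in\acl_0(t'_a)$, one transfers this to $k$; alternatively, $\a$ is $0$-algebraic over $a_1a_2$ with $a_1\subseteq\alpha$ and $\alpha$ $0$-interalgebraic with $h$, while the predimension computation $\dd(a_2,t'_a,t_2/\alpha)=0$ from the proof of Lemme \ref{L:a_et_alpha} shows $\a,t_2\in\acl_0(\sscl{\alpha})\subseteq\acl_0(\alpha)$, and then one passes from $\alpha$ (interalgebraic with $h$) to $k$ using that $h$ and $k$ differ by $h_0\in\acl_0(t'_a)$, with $t'_a\in\acl_0(\a,k)$ from the previous step. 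I expect the main obstacle to be bookkeeping the passage from $h$ to $k$ cleanly — keeping track of exactly which base ($\a$, or $\acl_0(\cc,\ca)$, or $\acl_0(\a,t_2)$) each interalgebraicity holds over, and verifying that the finite kernels and cokernels of the isogeny $S$ do not obstruct recovering $t'_a$ from $k$ over $\a$.
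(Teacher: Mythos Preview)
Your approach contains a genuine error, not just a gap. The intermediate claim
\[
\cc,\ca\indi0_k\a,t'_a,t_2
\]
is \emph{false} in general. Indeed, $t'_a=t_c/t_{ca}\in\acl_0(\cc,\ca)$, so this independence would force $t'_a\in\acl_0(k)$; but $k\in\acl_0(\a,t_2)$ and $t'_a$ is $0$-transcendental over $\a,t_2$ by Proposition~\ref{modify}, so $\tr(t'_a/k)\ge\tr(t'_a/\a,t_2)=r>0$ whenever $r>0$. Your justification (``$k$ and $h_0$ together are interalgebraic with $h$ and $h_0$'') only yields $\cc,\ca\indi0_{k,h_0}\a,t'_a,t_2$; dropping $h_0$ from the base is precisely the difficulty, since $h_0\in\acl_0(t'_a)\subseteq\acl_0(\cc,\ca)$ lies on the \emph{left} side as well. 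The weaker independence $\cc,\ca\indi0_k\a,t_2$ (without $t'_a$) \emph{is} true, but it does not follow from your argument.

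The same error propagates: your claim $t'_a\in\acl_0(\a,k)$ is also false (since $\acl_0(\a,k)\subseteq\acl_0(\a,t_2)$ and $t'_a\notin\acl_0(\a,t_2)$), and you rely on it in both the second and third parts.

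The paper proceeds quite differently, via explicit transcendence-degree bookkeeping: it first computes $\tr(h/a_1)=r+s$ and $\tr(k/a_1)=s$, then shows $a_1\in\acl_0(k)$ by proving $h_0\indi0_{a_1}k$ and $h_0\indi0 a_1$ (so $h_0\indi0_k a_1$, and $a_1\in\acl_0(h)=\acl_0(k,h_0)$ gives $a_1\in\acl_0(k)$). The independence $\cc,\ca\indi0_k\a,t_2$ is then obtained by a direct dimension count over $k,c_1$, and $t_2\in\acl_0(\a,k)$ follows from it together with $t_2\in\acl_0(\a,\cc,\ca)$. The final $T$-algebraicity uses a predimension computation $\dd(a_2,t_2/k)=0$ over $\sscl{k}\subseteq\acl_0(\cc,\ca)$. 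The moral is that one cannot simply transport the properties of $h$ to $k$ by the substitution $h\mapsto h_0^{-1}h$: the element $h_0$ lies in $\acl_0(\cc,\ca)$, so removing it from the base genuinely costs $r$ degrees of transcendence on that side, and this has to be accounted for explicitly.
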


\begin{proof} Le translaté $S\cdot(h,t'_a,t_2)=S\cdot(k,\bar 1,t_2)$ est
$T_0$-définissable sur $\acl_0(\a,B)$. Comme $S$ est une isogénie, il
suit que $k$ et $t_2$ sont $0$-interalgébriques sur $\a,B$. Puisque
$h_0\in\acl_0(t'_a,B)$ et $t'_a,h\in\acl_0(\cc,\ca,B)$, on a
$$k\in\acl_0(\cc,\ca,B)\cap\acl_0(\a,t_2,B).$$
Puisque $Nh=Nk$, la proposition \ref{P:S_isog} entraîne que 
$$a_1\in\acl_0(\ulcorner Nh\urcorner, B)=\acl_0(\ulcorner Nk\urcorner,
B)\subseteq\acl_0(k,B).$$
Le fait \ref{F:reduitgroupe} donne $\cc,\ca \indi 0_{a_1,B} \a$, donc 
$$ \cc,\ca \indi 0_{k,B} \a, t_2,$$
ce qui entraîne en particulier
$$\acl_0(\cc,\ca,B)\cap\acl_0(\a,t_2,B)=\acl_0(k,B).$$
Enfin, puisque $\cc\sse\ca\sse B$ est autosuffisant, la clôture
autosuffisante $\sscl{k,B}$ est dans $\acl_0(\cc,\ca,B)$. Ainsi,
$$\cc,\ca \indi 0_{\sscl{k,B}}a_2,t_a,t_2.$$
Comme $a_2,t_a,t_2$ est linéairement indépendant sur $\U(\cc\sse\ca\sse B)$, on
obtient
$$\dd(a_2,t_a,t_2/\sscl{k,B}) =\dd(a_2,t_a,t_2/\cc\sse\ca\sse B)
=\dd(A_2/\cc\sse\ca\sse B)=0,$$
ce qui permet de conclure  que $a_2$ est algébrique sur
$\sscl{k,B}$ et donc sur $k,B$. Comme
$a\in\acl(a_1,a_2)$, on conclut que $a$ est aussi
algébrique sur $k,B$.\end{proof}

On obtient ainsi le théorème A de l'introduction. 
\begin{theorem}\label{T:theoremeA} Un groupe interprétable dans un corps vert
collapsé est isogène à un quotient d'un sous-groupe définissable d'un
groupe algébrique par un sous-groupe central, qui est lui isogène à une
puissance du sous-groupe coloré $\U$.\end{theorem}
\begin{proof}Avec les définitions et notations précédentes, on considère le
groupe
$$\Gamma=\{n\in H:\exists\ t\in\U^s\ (n,\bar 1,t)\in S\}.$$ 
Notons que $\Gamma\le N$ est également central dans $H$, et
$\RM(\Gamma)=\RM(\U^s)=s$ comme $S$ est une isogénie.

Puisque $S$ induit une isogénie entre $N$ et $(K^*)^{r+s}$, l'intersection
$S\cap(H\times\{\bar 1\}\times \U^s)$ induit une isogénie entre $\Gamma$ et
$\U^s$. Comme $t_2$ est vert, la projection sur la première coordonnée de 
$$S\cdot(k,\bar 1,t_2)\cap (H\times\{\bar 1\}\times\U^s)$$
est $\Gamma k=k\Gamma$.  Puisque $S\cdot(k,\bar 1,t_2)=S\cdot(h,t'_a,t_2)$ est
définissable sur $\acl(a,B)$, la projection $k\Gamma$ l'est aussi~;
comme $t_2$ est un uple vert générique sur $a,B$, on a 
$\RM(k/a,B)=\RM(t_2/a,B)=s$ et $k$ est générique dans $k\Gamma$. De
même, le translaté $k'\Gamma$ est définissable sur $\acl(b,B)$, et $kk'\Gamma$
est définissable sur $\acl(ab,B)$, par la remarque \ref{R:S_isog}.

Les uples  $(a,k\Gamma)$, $(b,k'\Gamma)$ et 
$(ab,kk'\Gamma)=(a,k\Gamma)\cdot(b,k'\Gamma)$ sont deux à deux
indépendants sur $B$. Par le lemme \ref{L:stab}, le point $(a,k\Gamma)$ 
est générique dans un translaté, définissable sur $B$, de son 
stabilisateur $$\St(a,k\Gamma/B),$$ dans 
$G\times(H/\Gamma)$.

Le lemme \ref{F:iso} et la remarque \ref{R:iso} donnent une endogénie de
$G$ dans $H/\Gamma$, car $a$ est générique dans $G$ sur $B$. 

Pour vérifier que le noyau de cette endogénie est fini, il suffit de
montrer que $a$ et $k\Gamma$ sont interalgébriques sur $B$, c'est-à-dire   
$\RM(a/B)\le\RM(k\Gamma/B)$. D'après le lemme \ref{L:kinteralgt2}, on obtient
$$\RM(k/B)=\RM(k/a,B)+\RM(a/B)=\RM(t_2/a,B)+\RM(a/B)=s+\RM(a/B).$$
Comme $k$ est générique dans $k\Gamma$, on a par ailleurs $$\RM(k\Gamma/B)\ge\RM(k/B)-\RM(k/k\Gamma,B) = \RM(k/B)-s.$$
Ainsi 
$\RM(k\Gamma/B)\ge \RM(a/B)$, comme souhaité.

Par stabilité, le groupe $H$ est une limite projective $\varprojlim
\pi_i(H)$, où chaque  $\pi_i(H)$ est un groupe $T_0$-définissable et
donc algébrique. Par compacité il existe $i_0$ tel que $\U^s$ est isogène à $\pi_{i_0}(\Gamma)$, donc $G$ est isogène à son image
dans $\pi_{i_0}(H)/\pi_{i_0}(\Gamma)$.

\end{proof}

\section{Sous-groupes colorés}\label{S:ss}
Rappelons que tout groupe simple définissable dans un corps coloré se
plonge dans un groupe algébrique d'après \cite[Corollaire
5.10]{BPW09}. Ceci et le théorème A nous amènent à étudier les
sous-groupes définissables d'un groupe algébrique.

Comme dans les parties \ref{S:liebe} et \ref{cherchez}, on notera $T$ la théorie
d'un corps coloré $K$ et l'indice $0$ fera référence au réduit du pur
corps algébriquement clos. Ainsi un
groupe $T_0$-définissable est un groupe algébrique.

Nous commençons par une conséquence de la proposition \ref{P:expansion},
dans le cadre des corps colorés.

\begin{remark}\label{R:expansion}
À l'intérieur d'un groupe algébrique, on considère un translaté $C$ d'un
sous-groupe connexe définissable $S$, le tout défini sur un ensemble
al\-gé\-bri\-que\-ment clos $A$. Si le générique $b$ de $C$ vérifie $\U(\sscl{Ab})=\U(A)$, alors $S$ 
est un sous-groupe algébrique. 
\end{remark}

\begin{proof} 
Dans tout amalgame de Hrushovski, un type sur un ensemble algébriquement clos est stationnaire, puisque sa seule extension non-déviante correspond à l'amalgame libre.
Donc $p=\tp(b/A)$ est stationnaire. Par la proposition \ref{P:expansion}, il
suffit de montrer que $p$ est l'unique complétion de rang maximal du
$0$-type $p_0=\tp_0(b/A)$. 

Notons que $\sscl{Ab}$ coïncide avec la $0$-sous-structure
$B$ engendré par $A$ et $b$, puisqu'il n'y a pas de points colorés en 
dehors de $A$. 

Si $x\models p_0$, la $0$-sous-structure $X$ engendrée par $A$ et $x$
est $T_0$-isomorphe à $B$. Si $X=\sscl X$ et $\U(X)=\U(A)$,  alors
$x\models p$, car le diagramme d'une sous-structure autosuffisante détermine son type. Sinon, on a $\U(X)\supsetneq\U(A)$ ou $X\subsetneq\sscl X$. Dans ces deux cas,
$$\dd(\sscl X/A)\leq \dd(X/A)\leq \dd(B/A)=\dd(\sscl{Ab}/A),$$
avec au moins une inégalité stricte, ce qui implique $\RM(x/A)<\RM(b/A)$.
\end{proof}

\begin{theorem}\label{T:Bvert} Dans un corps vert, tout
sous-groupe définissable connexe $G$ d'un groupe algé\-brique a 
un sous groupe normal algé\-brique $N$, tel que le quotient $G/N$ est
définis\-sablement isomorphe 
à une puissance cartésienne du sous-groupe multiplicatif vert.
\end{theorem}

\begin{proof}
Dans un corps vert, considérons  un sous-groupe connexe définissable $G$ d'un groupe algébrique. On  suppose que $G$ est définissable sur
$\emptyset$.

Soient $a$ et $b$ deux génériques indépendants de $G$ et notons
$c=ab$ leur produit. Alors $\sscl a$ et $\sscl b$ sont en amalgame
libre, et la structure $\sscl a\sse \sscl b$ est autosuffisante. L'élément  
$c$ est $0$-algébrique sur cette structure, car la loi de groupe est 
$T_0$-définissable. Ainsi 
$$\sscl c\subseteq \acl_0(\sscl a\sse \sscl b),$$
par la remarque \ref{R:verts}(\ref{R:verts:pasdenouveauxverts}), qui
est également valable dans le cas non-collapsé.

En particulier, la base verte $t$ de $\sscl c$ est une combinaison
linéaire des bases vertes $r$ et $s$ de  $\sscl a$ et $\sscl b$,
respectivement. Par le même argument qu'au début de la preuve de la
proposition \ref{modify}, on peut supposer que $t=r\cdot s$.
Comme $r$, $s$ et $t$ sont deux à deux indépendants, 
ils sont également $0$-indépendants \cite[Lemme 2.1]{BPW09}. Par
le lemme \ref{L:stab}, l'uple $r$ est $0$-générique dans un translaté
$T_0$-définissable sur $\emptyset$
de son $0$-stabilisateur dans $(K^*)^{|r|}$, qui de plus
est $0$-connexe. Or, les seuls sous-groupes algébriques
$0$-connexes de $(K^*)^{|r|}$ sont des tores, 
dont la dimension de Zariski correspond à la dimension linéaire du
point générique. On conclut que 
$r$ est un uple $0$-transcendant. Comme
la structure engendrée par $r$ est autosuffisante dans $\sscl a$ et donc
autosuffisante, l'uple vert $r$ est générique dans $\U^{|r|}$.

Puisque $(a,r)$, $(b,s)$ et
$(c,t)$ sont deux à deux indépendants et que $r$ est algébrique sur
$a$, les lemmes  \ref{L:stab}, \ref{F:iso} et la remarque
\ref{R:iso} entraînent que le stabilisateur $\St(a,r)$
induit une endogénie définissable $\phi$ de $G$ sur $(\U^*)^{|r|}$.
Le conoyau de cette endogénie est trivial, car $\U$ n'a pas de torsion et 
donc pas de sous-groupes finis non-triviaux. 

Pour montrer que le noyau $\ker(\phi)$ est un groupe algébrique, il suffit 
de le faire pour sa composante connexe $N$, car nous sommes à 
l'intérieur d'un groupe algébrique. Le paramètre canonique de $\ker(\phi)a$
est interdéfinissable avec $\phi(a)=r$, et donc le paramètre canonique de
$Na$ est algébrique sur $r$. Si $n$ est générique dans $N$ sur $a$, alors 
il l'est aussi sur $r$, donc $na$ est un générique de $Na$ sur $\acl(r)$. La
remarque \ref{R:expansion} permet de conclure si l'on vérifie que 
$$\U(\sscl{na,\acl(r)})=\U(\acl(r)).$$
Puisque  $(n,\bar1)\in\St(a,r)$, les points $a$ et
$na$ ont même type sur $\acl(r)$, ce qui donne l'égalité
$\U(\sscl{na,\acl(r)})=\U(\sscl{a,\acl(r)})$.

Posons $C=\acl(r)\cap \sscl{a}$. Alors $a$ et $\acl(r)$ sont trivialement 
indépendants au-dessus de $C$, car $r\in C$. La caractérisation
de l'indépendance nous donne que $\sscl{a,\acl(r)}$ est l'amalgame libre de 
$\sscl{a}$ et $\acl(r)$ au-dessus de $C$. De plus, puisque
$\U(\sscl{a})=\sscl r$ est le sous-groupe engendré par $r$, on obtient
$$\U(\sscl{a,\acl(r)})=\U(\acl(r)).\qedhere$$\end{proof}

Dans le cas des corps rouges, les sous-groupes algébriques  de $(K^+)^{|r|}$ sont donnés par des systèmes de $p$-polynômes, dont la
dimension de Zariski ne correspond pas forcement à la dimension linéaire sur le
corps premier. De plus, il y a des sous-groupes rouges finis. Néanmoins, la
preuve précédente s'adapte, nous permettant d'obtenir le résultat suivant~:

\begin{theorem}\label{T:Brouge}
Dans un corps rouge, tout sous-groupe définissable connexe
$G$ d'un groupe algébrique a un sous groupe normal algé\-brique $N$ tel que le
quotient $G/N$ est définis\-sa\-blement isogène au groupe des
points rouges d'un sous-groupe algébrique de $(K^+)^n$.
\end{theorem}

\begin{corollary}\label{C:Bsimple} Tout groupe simple définissable dans un corps coloré est définis\-sa\-blement isomorphe à un groupe algébrique. En particulier, aucun mauvais groupe n'est définissable dans un corps coloré.\end{corollary}
\begin{proof} On peut supposer que le groupe simple $G$ est infini. D'après
\cite[Théorème 6.4 et Corollaire 5.10]{BPW09}, le groupe $G$  est
définissablement isomorphe à un groupe linéaire. Dans le cas noir, il est
algébrique \cite[Proposition 2.4 et
conclusion p.1354]{Po99}. Dans les cas rouge et vert, d'après les
théorèmes \ref{T:Bvert} et \ref{T:Brouge}, il existe un sous-groupe algébrique
normal $N$ de $G$, tel que le quotient est isogène
à un groupe abélien. Par simplicité, le groupe $G=N$ est algébrique. 
\end{proof}

\section{La fusion au dessus de l'égalité}\label{S:fusion}

Une théorie $\omega$-stable a la propriété de la multiplicité
définissable (DMP) si, pour toute formule $\varphi(x,y)$, tout ordinal 
$\alpha$ et entier $n<\omega$, l'ensemble des paramètres $a$ tels que 
$\varphi(x,a)$ a rang de Morley $\alpha$ et multiplicité $n$ est 
définissable. 

Dans cette partie, nous décrirons complètement les groupes définissables 
dans une fusion $T$ (libre ou collapsée) de deux théories fortement
minimales $T_1$ et $T_2$ avec la DMP, à langages disjoints
\cite{Hr92,BMPZ07}. Comme dans les parties précédentes, les notions 
modèle-théoriques seront prises au sens de $T$~; l'indice $i$,  pour $i=1,2$,
fera référence à la théorie $T_i$.

Rappelons que la fusion $T$ est obtenue en utilisant la prédimension 
$$\dd(X) = \dim_1(X) + \dim_2(X) -|X|,$$
où $\dim_1$ et $\dim_2$ sont les rangs de Morley respectifs de $T_1$ et $T_2$.
Comme dans la partie \ref{S:liebe}, cette prédimension induit un
opérateur, la clôture autosuffisante $\sscl.$, ainsi qu'une dimension,
notée $\dim$~: la prédimension de la clôture autosuffisante. Alors
$\bar a\in\acl(B)$ implique $\dim(\bar a/B)=0$~; la réciproque est
vraie dans le cas collapsé. En particulier, deux
uples interalgébriques ont même dimension. 

Pour deux uples $\bar a$, $\bar b$ et un ensemble $C$ tel que
$\sscl{aC}\cap\sscl{bC}=C$, l'indépendance
est caractérisée de la manière suivante~:
$$\bar a\ind_C \bar b\quad\text{si et seulement si}\quad  
\left\{\begin{array}{l}
\displaystyle\sscl{\bar a,C}\indi i_C \sscl{\bar b,C}\text{ pour
}i=1,\;2\,,\text{ et}\\
\sscl{\bar a,\bar b,C}=\sscl{\bar a,C}\cup\sscl{\bar
b,C}.\end{array}\right. 
$$ 

Après adjonction de constantes au langage, on peut supposer que chaque
théorie fortement minimale $T_i$ a élimination faible des
imaginaires. Nous pouvons ainsi parler de la dimension d'uples
d'imaginaires des théories $T_1$ et $T_2$.

\'Etant données deux sortes imaginaires $S_1$ de $T_1$ et $S_2$ de $T_2$, on entend par un ensemble interprétable dans  $S_1\times S_2$ la
projection d'un ensemble définissable réel. Nous
commençons par étudier les sous-groupes interprétables dans un
produit de groupes $T_i$-interprétables. La démonstration du lemme
suivant s'inspire de la preuve de la platitude dans
l'amalgame \emph{ab initio} \cite{Hr93}.

\begin{lemma}\label{L:fusion} Soient $H_i$ des groupes
$T_i$-interprétables pour $i=1,2$. Tout sous-groupe interprétable
connexe $K$ de $H_1 \times H_2$ est de la forme $K_1 \times K_2$, où
$K_i \leq H_i$ est $T_i$-interprétable. Un générique $g=(g^1,g^2)$ de
$K$ consiste en un couple indé\-pendant de génériques de chaque
$K_i$. De plus, au-dessus de la clôture algébrique des paramètres nécessaires, on a 
$\dim(g^i) = \dim_i(g^i)$ et $\dim(K) = \dim_1(K_1)+\dim_2(K_2)$.
\end{lemma}

\begin{proof} On peut supposer que le langage est relationnel et que
les groupes $H_1$, $H_2$ et $K$ sont interprétables sur $\emptyset$.
Nous allons vérifier d'abord que $g^1$ et $g^2$ sont $T$-indépendants pour
un générique $g=(g^1,g^2)$ de $K$.  Par élimination faible des imaginaires, fixons un uple réel fini $a=(a^1,a^2)$ tel 
que $a^i$ est $i$-algébrique sur l'imaginaire $g^i$, qui est lui 
$T_i$-définissable sur $a^i$. Posons $X=\sscl a\cap\acl(\emptyset)$, qui 
est autosuffisant comme intersection de deux ensembles autosuffisants.

Comme $X\subset \acl(\emptyset)$, il suffit de montrer que 
$a^1\ind_X a^2$, ce qui par la caractérisation 
précédente équivaut à montrer que les clôtures $\sscl{X,a^1}$ et 
$\sscl{X,a^2}$ sont $i$-indépendants pour chaque $i=1,2$ et que 
$\sscl{X,a}$ est la réunion de ces deux clôtures. 

Considérons une suite de Morley $(g_i)_{i <\omega}$ du type $\tp(g)$. 
Pour $i\neq j$, l'élément $g_i{g_j}^{-1}$ est aussi générique avec même 
type que $g$ sur $\acl(\emptyset)$, par connexité de $K$. On choisit alors 
$b_{i,j}=(b_{i,j}^1,b_{i,j}^2)$ tel que
$(g_ig_j^{-1},b_{i,j})$ a même type que $(g,a)$ sur
$\acl(\emptyset)$. Comme $g_ig_j^{-1}$ et $g_{k}g_{l}^{-1}$ sont
indépendants pour $(i,j)\not=(k,l)$, on a
$$\sscl{b_{i,j}}\cap\sscl{b_{k,l}} = \sscl{b_{i,j}}\cap
\acl(\emptyset) = \sscl{a}\cap \acl(\emptyset) =X.$$
Fixons $n>0$. Puisque $b_{i,j}^1\in\acl_1(b_{0,i}^1,b_{0,j}^1)$ pour
$i,j>0$, on obtient
$$\dim_1((b_{i,j}^1)_{i<j<n}/X)\le\dim_1((b_{0,k}^1)_{k<n}/X)\le
n\,\dim_1(a^1/X).$$

Si $$Y = \bigcup\limits_{i<j<n}\sscl{b_{i,j}},$$ 
alors
$$\begin{aligned}\dim_1(Y/X)&=
\dim_1((b_{i,j}^1)_{i<j<n}/X) + \dim_1(Y /X,(b_{i,j}^1)_{i<j<n})\\
&\leq n\,\dim_1(a^1/X) + \sum_{i<j<n}\dim_1(\sscl{b_{i,j}}/X,b_{i,j}^1)\\
&\leq n\,\dim_1(a^1/X) + \binom{n}{2}\dim_1(\sscl{a}/X,a^1).\end{aligned}$$
De même,
$$\dim_2(Y/X)\leq n\,\dim_2(a^2/X)+\binom{n}{2}\dim_2(\sscl{a}/X,a^2).$$
Puisque $Y$ est la réunion d'ensembles deux à deux disjoints au-dessus de 
$X$, qui est autosuffisant, on
obtient~:
$$\begin{aligned}
0 \leq\ \dd(Y/X) & = \dim_1(Y/X)+\dim_2(Y/X) - |Y\setminus X|\\
&\leq n\,\big(\dim_1(a^1/X) +\dim_2(a^2/X)\big)\\ 
&\quad+\binom{n}{2}\big(\dim_1(\sscl{a}/X,a^1)+\dim_2(\sscl{a}/X,a^2)-
|\sscl{a}\setminus X|\big).
\end{aligned}$$
En faisant tendre $n$ vers l'infini, on obtient   
\begin{equation*}\tag{$\star$}\begin{aligned}
0&\le\dim_1(\sscl{a}/X,a^1)+\dim_2(\sscl{a}/X,a^2)-|\sscl{a}\setminus X|\\ 
&=\dd(\sscl{a}/X,a)+\dim_1(a/X,a^1)+\dim_2(a/X,a^2)-|a\setminus
X|.\end{aligned}\end{equation*}
Notons que $$\dim_1(a/X,a^1)\le|a^2\setminus (X\cup a^1)|,$$
avec égalité si et seulement si $a^{2}\setminus (X\cup a^1)$
est un uple de points $1$-indépendants sur $X,a^1$. De même pour
$\dim_2(a/X,a^2)$. Comme
$$|a\setminus X|=|a^1\setminus (X\cup a^2)|+|a^2\setminus (X\cup
a^1)|+|(a^1\cap a^2)\setminus X|,$$
on en déduit que 
$$\dim_1(a/X,a^1)+\dim_2(a/X,a^2)-|a\setminus X|\leq -|(a^1\cap
a^2)\setminus X|.$$
La prédimension $\dd(\sscl{a}/X,a) \leq 0$. Donc, par ($\star$), on a 
$$\begin{aligned}
(a^1\cap a^2)&\subseteq X,\\
\dd(\sscl{a}/X,a) &= 0, \\ 
\dim_1(a/X,a^1) &=|a^2\setminus X|,\quad\text{et}\\
\dim_2(a/X,a^2) &=|a^1\setminus X|.
\end{aligned}$$
Ceci entraîne que $a^1$ et $a^2$ sont $i$-indépendants pour chaque
$i$ au-dessus de $X$. De plus, comme l'uple $a^2\setminus X$ est un
uple de points $1$-indépendants sur $X\cup a^1$, il suit que $X\cup
a^1$ est autosuffisant dans  $X\cup a$. Puisque $X\subseteq\sscl a$ et
$\delta(\sscl a/X,a)=0$, on a que $\sscl a=X\cup a$, qui est
autosuffisant. Donc
$X\cup a^i$ est aussi autosuffisant pour $i=1,\,2$, ce qui entraîne que 
$g^1$ et $g^2$ sont indépendants. 

Soit maintenant $K_i$ la projection de $K$ sur $H_i$. Le groupe $K_i$ est 
aussi connexe et interprétable sur $\emptyset$. De plus, l'élément $g^i$ 
est générique dans $K_i$.
Puisque $g^1\ind g^2$, l'uple $g=(g^1,g^2)$ est alors générique
dans $K_1\times K_2$. Ainsi, l'indice de $K$ dans $K_1\times K_2$ est
fini. Or, la connexité de chaque $K_i$ donne celle de $K_1\times
K_2$, donc $K=K_1\times K_2$.
 
Vérifions maintenant que $\dim(g^1)=\dim_1(g^1/\acl(\emptyset)$~:  
comme $X\cup a^1$ est autosuffisant, 
$$(X\cup a^1)\cap\acl(\emptyset)=X\qquad\text{et}\qquad
a^1\ind_X\acl(\emptyset),$$ 
on obtient que $\acl(\emptyset)\cup a^1$ est
autosuffisant, et $a^1\indi2_X\acl(\emptyset)$. Ainsi
$a^{1}\setminus \acl(\emptyset)$ reste $2$-indépendant sur
$\acl(\emptyset)$, et
$\dim(a^1)=\dim_1(a^1/X)=\dim_1(a^1/\acl(\emptyset))$.

Pour terminer, voyons que $K_1$ est $T_1$-interprétable. Puisque dans ce contexte, tout type sur un ensemble algébriquement clos est stationnaire,  il suffit de montrer, par la proposition \ref{P:expansion}, que le type
$\tp(g^1/\acl(\emptyset))$ est l'unique complétion de rang de Morley
maximal du $1$-type $\tp_1(g^1/\acl(\emptyset))$. Prenons
$(h,b)$ une réalisation de $\tp_1(g^1,a^1/\acl(\emptyset))$. 
Si $b\setminus \acl(\emptyset)$ n'est pas $2$-indépendant sur 
$\acl(\emptyset)$, alors sa prédimension chute et le rang de Morley de $h$ est
strictement inférieur à celui de $g^1$. Sinon, les ensembles
autosuffisants  $\acl(\emptyset)\cup a^1 $ et  $\acl(\emptyset)\cup b$ 
sont isomorphes, et donc $b$ et $a$ ont le même
type. Puisque  $g^1$ est $T_1$-définissable sur $a^1$, il suit que
$h$ et $g^1$ ont également même type.\end{proof}

\begin{remark}\label{R:fusion}
En particulier, tout sous-groupe interprétable connexe d'un groupe
$T_i$-interprétable est également $T_i$-interprétable, et sa dimension 
est égale à sa $i$-dimension.
Notons que l'égalité entre ces deux dimensions pour des ensembles
$T_i$-interprétables apparaît dans \cite[Theorem 2$(i)$]{Hr92}.
\end{remark}

La condition supplémentaire du théorème \ref{T:morphisme}
est donc vérifiée dans ce cadre~:
\begin{lemma}\label{L:supp} Si $G$ est un groupe connexe définissable, alors
pour tout homomorphisme définissable $\psi:G\to H$ sur un ensemble $A$
algébriquement clos vers un groupe $T_i$-interprétable, on a
$\dim_i(\psi(a)/A)\le\dim(a/A)$, pour $a$ un générique de $G$ sur $A$.
En particulier, il existe un tel homomorphisme avec $\dim_i(\psi(a)/A)$
maximal et fini.
\end{lemma}
\begin{proof}
Notons que $\psi(G)$ est un sous-groupe interprétable connexe de $H$, 
et est donc $T_i$-interprétable, par la remarque précédente. Si $a$ est générique dans $G$ sur $A$, alors $\psi(a)$ est générique dans
$\psi(G)$ sur $A$, et
$$\dim_i(\psi(a)/A)=\dim(\psi(a)/A)\le\dim(a/A)\qedhere$$
\end{proof}

La proposition suivante est évidente dans le cas d'une fusion collapsée.
\begin{prop}\label{P:dim0} Dans une fusion libre de
deux théories fortement minimales avec la DMP au dessus de l'égalité,
tout groupe définissable de dimension $0$ est fini.\end{prop}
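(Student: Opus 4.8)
Dans le cas collaps\'e, l'\'egalit\'e $\dim(\bar a/B)=0$ entra\^\i ne $\bar a\in\acl(B)$ ; appliqu\'ee \`a un g\'en\'erique $g$ de $G$, elle donne $g\in\acl(\emptyset)$, donc $G$ est fini. Pla\c cons-nous dans le cas libre et raisonnons par l'absurde : on suppose $G$ connexe (quitte \`a passer \`a $G^0$), infini et d\'efinissable sur $\emptyset$, et on fixe deux g\'en\'eriques ind\'ependants $a$, $b$, de produit $ab$. Le plan est de ramener $G$, \`a isog\'enie pr\`es, \`a un produit de groupes interpr\'etables dans les th\'eories de base, puis d'invoquer le lemme~\ref{L:fusion}. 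On applique pour cela la machinerie de la configuration de groupe (faits~\ref{F:reduitgroupe} et~\ref{R:confgroupe}) \`a $G$ relativement au r\'eduit $T_1$, puis au r\'eduit $T_2$ (chaque $T_i$ ayant, apr\`es adjonction de constantes, l'\'elimination g\'eom\'etrique des imaginaires) : quitte \`a adjoindre au langage des param\`etres ind\'ependants de $a$, $b$, on obtient pour $i=1,2$ un groupe connexe $H_i$ interpr\'etable dans $T_i$ et un morphisme d\'efinissable $\phi_i:G\to H_i$ tels que $\phi_i(a)$ soit $T_i$-interalg\'ebrique avec la trace $a_1^{(i)}=\acl(a)\cap\acl_i(\acl(b),\acl(ab))$.

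Posons $\psi=(\phi_1,\phi_2):G\to H_1\times H_2$ ; son image $\psi(G)$ est un sous-groupe connexe $T$-interpr\'etable de $H_1\times H_2$, donc, par le lemme~\ref{L:fusion}, de la forme $K_1\times K_2$ avec chaque $K_i\le H_i$ connexe et $T_i$-interpr\'etable ; de plus, un g\'en\'erique $(u_1,u_2)$ de $\psi(G)$ fournit des g\'en\'eriques $u_i$ de $K_i$ v\'erifiant $\dim(u_i)=\dim_i(u_i/\acl(\emptyset))$. Comme $\psi(a)$, qui est un g\'en\'erique de $\psi(G)$, appartient \`a $\dcl(a)$, on a $\dim(\psi(G))\le\dim(G)=0$, d'o\`u $\dim(u_i)=0$ et $\dim_i(u_i/\acl(\emptyset))=0$ ; ainsi $u_i\in\acl_i(\acl(\emptyset))\subseteq\acl(\emptyset)$, de sorte que $K_i$, dont $u_i$ est un g\'en\'erique, est fini, donc trivial par connexit\'e. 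Par cons\'equent $\psi$, et donc chacun des $\phi_i$, est trivial, si bien que $a_1^{(1)}$ et $a_1^{(2)}$ sont contenus dans $\acl(\emptyset)$.

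Il ne reste qu'\`a en d\'eduire la $T$-ind\'ependance $a\ind_{\acl(\emptyset)}\acl(b),\acl(ab)$, ce qui ach\`eve la preuve : comme $a=(ab)b^{-1}$ appartient \`a $\dcl(b,ab)$, la stabilit\'e forcerait alors $a\in\acl(\emptyset)$, absurde puisque $G$ est infini connexe. Or le fait~\ref{F:reduitgroupe} appliqu\'e \`a chacun des r\'eduits, la trivialit\'e des traces et la g\'en\'ericit\'e de $a$ fournissent d\'ej\`a les $T_i$-ind\'ependances $\acl(a)\indi i_{\acl(\emptyset)}\acl(b),\acl(ab)$ pour $i=1,2$ ; l'obstacle principal consiste \`a en d\'eduire la $T$-ind\'ependance via la caract\'erisation de $\ind$ dans la fusion, c'est-\`a-dire \`a v\'erifier la condition sur les cl\^otures autosuffisantes
$$\sscl{a,b,ab,\acl(\emptyset)}=\sscl{a,\acl(\emptyset)}\cup\sscl{b,ab,\acl(\emptyset)}.$$
C'est ici qu'intervient de mani\`ere essentielle l'hypoth\`ese $\dim(G)=0$ : elle entra\^\i ne $\dd(\sscl{a,b,ab,\acl(\emptyset)}/\acl(\emptyset))=0$ et, jointe \`a la trivialit\'e des trois traces $a_1^{(i)}$, $b_1^{(i)}$, $(ab)_1^{(i)}$, un d\'ecompte de pr\'edimension dans l'esprit de la preuve de platitude de~\cite[Section~4.2]{Hr92} montre qu'aucun \'el\'ement nouveau n'appara\^\i t lors de l'amalgame de $a$ avec $(b,ab)$, d'o\`u l'\'egalit\'e voulue.
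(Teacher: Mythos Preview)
Your argument is correct and follows essentially the same route as the paper: produce the morphisms $\phi_1,\phi_2$ via the group configuration in each reduct, apply Lemma~\ref{L:fusion} to $(\phi_1,\phi_2)(G)$ to see that both $\phi_i$ have trivial (algebraic) image, and conclude by turning the resulting $T_i$-independences into a $T$-independence that forces a generic into $\acl(\emptyset)$. Two remarks. First, the case distinction collapsed/free is unnecessary: the paper's argument is uniform. Second, the step you single out as the ``obstacle principal'' is in fact the easiest part and does not require any flatness-style count. The paper orients the independence the other way (showing $\sscl g,\sscl h\ind\sscl{gh}$ rather than $\sscl a\ind\sscl b,\sscl{ab}$), and then the self-sufficiency condition falls out in one line from $\dim=0$: since
\[
\dd(\sscl g\cup\sscl h\cup\sscl{gh})\le\dd(\sscl g)+\dd(\sscl h)+\dd(\sscl{gh})=\dim(g)+\dim(h)+\dim(gh)=0,
\]
the union $\sscl g\cup\sscl h\cup\sscl{gh}$ is already self-sufficient, and the intersection condition is immediate from pairwise $T$-independence. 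So your sketch is right, but the missing computation is shorter than you suggest.
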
 
\begin{proof}  Soit $G$ un groupe connexe définissable sur
$\emptyset$ de dimension $0$. Sur un ensemble algébriquement clos de paramètres
indépendants que l'on ajoute au
langage, le théorème \ref{T:morphisme} et le lemme \ref{L:supp} donnent pour $k=1,2$ des 
morphismes interprétables $\phi_k:G\to H_k$ où $H_k$ est $T_k$-interprétable, 
tels que pour deux génériques indépendants $a$ et $b$ on a 
\begin{equation}\tag{\dag}\acl(b),\acl(ab)\indi k_{\phi_k(a)}\acl(a).
\end{equation}
Par la remarque \ref{R:fusion}, on peut supposer chaque morphisme surjectif en remplaçant $H_k$ par $\phi_k(G)$.
De plus, comme $G$ est de dimension nulle, chaque $H_k$ l'est aussi,
donc $\phi_k(a)$ est $k$-algébrique. L'indépendance $(\dag)$ donne ainsi
$$\sscl{b},\sscl{ab}\indi k\sscl{a}.$$ 
Comme $a$, $b$ et $ab$ sont deux à deux indépendants, on a
$$(\sscl{b}\cup\sscl{ab})\cap\sscl{a}
=(\sscl{b}\cap\sscl{a})\cup(\sscl{ab}\cap\sscl{a})=\acl(\emptyset).$$
Par sous-modularité,
$$\dd(\sscl{a}/\sscl{b}\cup\sscl{ab}) 
\le \dd(\sscl{a}/\acl(\emptyset))=\dim(a/\acl(\emptyset))=\dim(a)=0.$$
Ainsi, puisque  $\sscl{b,ab}=\sscl{b}\cup\sscl{ab}$ est autosuffisant, l'ensemble
$$\sscl{b}\cup\sscl{ab}\cup\sscl{a}$$
l'est aussi. La caractérisation de l'indé\-pen\-dance implique que
$$ b, ab\ind a.$$
Donc $a$ est algébrique, ce qui montre le résultat.
\end{proof}

Grâce aux résultats précédents, nous sommes maintenant en mesure de
montrer le théorème C de l'introduction~: les groupes définissables dans la fusion sur l'égalité (libre ou collapsée) sont essentiellement des produits de groupes interprétables dans chacune des théories de base.

\begin{theorem}\label{T:fusion} Tout groupe connexe définissable dans une fusion (libre ou collapsée) au-dessus de l'égalité de deux théories fortement minimales
avec la DMP est, modulo un noyau fini, isomorphe à un produit de
groupes interprétables dans chacune des théories. 
\end{theorem}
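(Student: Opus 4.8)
The strategy follows the template of the proof of Theorem~A: replace a generic triple $a,b,ab$ of pairwise independent generics of $G$ by a configuration of autosufficient imaginary tuples, project into a product of reducts, extract from the group configuration theorem a group of the form $H_1\times H_2$, and finally recover $G$ up to isogeny by a stabilizer computation. More precisely, fix $G$ connected definable over $\emptyset$, take three independent generics $a,b,c$ with $ab$ and related products, and pass to the self-sufficient closures $\sscl a$, $\sscl b$, $\sscl{ab}$ etc. The key structural input is Fact~\ref{F:reduitgroupe} together with Fact~\ref{R:confgroupe}, applied twice --- once for the reduct $T_1$ and once for $T_2$ --- producing $T_k$-$\star$-interpretable groups $H_k$ and morphisms $\phi_k\colon G\to H_k$ with $\sscl a,\sscl b\indi k_{\phi_k(ab)}\sscl{ab}$. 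By $\omega$-stability we may take $H_k$ to be $T_k$-interpretable.

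Next I would study the combined map $\bar\phi=(\phi_1,\phi_2)\colon G\to H_1\times H_2$ and apply Lemma~\ref{L:fusion} to its (connected) image: this gives that $\overline\phi(G)=K_1\times K_2$ with $K_i\le H_i$ a $T_i$-interpretable group, that a generic of $\bar\phi(G)$ is an independent pair of generics of the $K_i$, and the dimension equality $\dim(\phi_i(g))=\dim_i(\phi_i(g)/\acl(\emptyset))$. The point of this step is to produce the target product group $K_1\times K_2$ and to control the generic of the image. Then I would consider the stabilizer $S^*=\St(g,\bar\phi(g))$ inside $G\times(K_1\times K_2)$: since $g,h,gh$ are pairwise independent and $\bar\phi$ is a morphism, the triple $(g,\bar\phi(g)),(h,\bar\phi(h)),(gh,\bar\phi(gh))$ is pairwise independent, so by Lemma~\ref{L:stab} the element $(g,\bar\phi(g))$ is generic in a translate of the connected group $S^*$, and by Lemma~\ref{F:iso} and Remark~\ref{R:iso} $S^*$ induces an endogeny $G\to K_1\times K_2$.

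The heart of the argument is then the analysis of $N$, the connected component of $\ker\bar\phi$: it is a connected definable subgroup all of whose generics have dimension $0$ (because, as in the proof of Proposition~\ref{P:dim0}, the defining independences force $\sscl n\cap(\sscl g\cup\sscl h)=\acl(\emptyset)$ and hence $\dim(n)=0$), so by Proposition~\ref{P:dim0} $N$ is finite. Thus $\ker\bar\phi$ is finite and $\bar\phi$ is itself an isogeny from $G$ onto $K_1\times K_2$. Composing with the isogeny induced by $S^*$ if necessary to repair the cokernel, and recalling that the parameters adjoined (to define the $H_k$, and those coming from the group configuration theorem via Fact~\ref{R:confgroupe}) were taken independent of the original parameters, one concludes that $G$ is isogenous to $K_1\times K_2$ with $K_i$ interpretable in $T_i$.

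\textbf{Main obstacle.} The delicate point is not the final stabilizer bookkeeping but verifying that $N=\ker\bar\phi^0$ really has all generics of dimension $0$: one must check that the self-sufficient closure of a generic $n$ of $N$ meets the union of the self-sufficient closures of two independent generics of $G$ only in $\acl(\emptyset)$, which is exactly where the independences $\sscl g,\sscl h\indi k_{\phi_k(gh)}\sscl{gh}$ and the fact that $\phi_k(n)=1$ (so $\phi_k(n)\in\acl(\emptyset)$) are used in tandem, mirroring the computation in Proposition~\ref{P:dim0}. A secondary subtlety is ensuring Lemma~\ref{L:fusion} applies to $\bar\phi(G)$ rather than to a subgroup of $H_1\times H_2$ given in advance --- this is fine since $\bar\phi(G)$ is connected and interpretable over the (enlarged) empty set, but it is worth stating explicitly. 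Finally, in the collapsed case one should note that finite kernels and cokernels are harmless because $\dim$ is isogeny-invariant, and that the passage from "definable" to "interpretable" is automatic since these fusions have finite Morley rank and eliminate imaginaries.
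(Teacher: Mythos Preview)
Your outline is sound up to the construction of $\bar\phi=(\phi_1,\phi_2)$ and the application of Lemma~\ref{L:fusion} to its image; the gap is precisely at the step you flag as the main obstacle. You assert that a generic $n$ of $N=(\ker\bar\phi)^0$ has $\dim(n)=0$ ``as in the proof of Proposition~\ref{P:dim0}'', but that proposition runs in the opposite direction: it \emph{assumes} the generic has dimension~$0$ and deduces finiteness. Knowing $\phi_k(n)=1$ does not force $\dim(n)=0$; the independences $\sscl g,\sscl h\indi k_{\phi_k(gh)}\sscl{gh}$ concern generics of $G$, not of $N$, and the intersection condition $\sscl n\cap(\sscl g\cup\sscl h)=\acl(\emptyset)$ follows already from $n\ind g,h$ and says nothing about $\dim(n)$. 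Nothing in your plan provides the inequality $\dim(ab)\le\dim(\bar\phi(ab))$, which is what is really needed.

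The paper obtains that inequality not from the triangle $a,b,ab$ but from the full quadrangle on six points $a,b,c,ab,ca,cab$ and four lines $A_1,\dots,A_4$, via a flatness computation in the spirit of \cite[Lemme~15]{Hr93}. One sets $A_s=\bigcap_{i\in s}A_i$, chooses finite autosufficient approximations $B_s$, writes $\dim(a)=\sum_s(-1)^{|s|}\dim(A_s)\le\sum_s(-1)^{|s|}\dd(B_s)$, splits the right-hand side into its $T_1$- and $T_2$-parts (the cardinality term cancels by modularity), and then bounds each part by repeated submodularity of $\dim_k$. This yields $\dim(a)\le\Delta_1+\Delta_2$ with $\Delta_k=\dim_k(B_{12}B_{13})-\dim_k(B_{12}B_{13}/B_{14})$; only at this point does Fact~\ref{F:reduitgroupe} enter, giving $\Delta_k\le\dim_k(\phi_k(ab))$. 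Combined with Lemma~\ref{L:fusion} this gives $\dim(ab)\le\dim(\bar\phi(ab))$, whence the kernel has dimension~$0$ and Proposition~\ref{P:dim0} applies. This alternating-sum step is the genuine content of the proof and is not bypassed by any stabilizer argument.

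Two minor remarks. Once $\ker\bar\phi$ is finite you do not need the stabilizer $S^*$: the homomorphism $\bar\phi$ is itself the desired isogeny onto $K_1\times K_2$. And your final sentence is inaccurate: the free fusion has Morley rank $\omega$, and fusions over equality are not known to eliminate imaginaries (they are not fields), so the theorem is stated and proved for definable groups only.
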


\begin{proof} Soit $G$ un groupe infini connexe $T$-définissable sur $\emptyset$. 
Sur un ensemble algébriquement clos de paramètres indépendants que l'on ajoute
au langage, le théorème \ref{T:morphisme}, la remarque \ref{R:fusion} et le lemme \ref{L:supp} donnent
des morphismes surjectifs interprétables $\phi_k:G\to H_k$ où $H_k$ est
$T_k$-interprétable, tels que pour deux génériques
indépendants $a$ et $b$ on a 
\begin{equation*}\tag{\dag}\acl(a),\acl(b)\indi k_{\phi_k(ab)}\acl(ab).
\end{equation*}
On pose $\phi=(\phi_1,\phi_2):G\to H_1\times H_2$.
Comme $\phi_1$ et $\phi_2$ sont surjectifs et $\phi(G)$ est connexe, le
lemme \ref{L:fusion} donne que $\phi$ est également surjectif. De plus,

\begin{equation*}\tag{\ddag}
\dim_1(\phi_1(a))+\dim_2(\phi_2(a))=\dim(\phi(a)) \leq \dim(a).
\end{equation*}
Considérons un troisième élément générique $c$ indépendant de $a,b$, et  le diagramme suivant~:
\begin{center}
\begin{picture}(200,110)
\put(0,0){\line(1,1){100}}
\put(200,0){\line(-1,1){100}}
\put(0,0){\line(5,2){143}}
\put(200,0){\line(-5,2){143}}
\put(-13,-2){$ab$}
\put(203,-2){$ca$}
\put(97,104){$a$}
\put(47,55){$b$}
\put(150,55){$c$}
\put(95,45){$cab$}
\put(-2,-2){$\bullet$}
\put(197,-2){$\bullet$}
\put(97,98){$\bullet$}
\put(98,38){$\bullet$}
\put(54,55){$\bullet$}
\put(141,55){$\bullet$}
\end{picture}\end{center}
De manière analogue au \cite[Lemma 15]{Hr93}, on considère des ensembles
autosuffisants finiment engendrés~:
$$\begin{aligned}
A_1&=\sscl{a,\phi(a),b,\phi(b),ab,\phi(ab)}\subseteq\acl(a,b,ab),\\
A_2&=\sscl{a,\phi(a),c,\phi(c),ca,\phi(ca)}\subseteq\acl(a,c,ca),\\
A_3&=\sscl{ca,\phi(ca),cab,\phi(cab),b,\phi(b)}\subseteq\acl(ca,cab,c),\\
A_4&=\sscl{ab,\phi(ab),c,\phi(c),cab,\phi(cab)}\subseteq\acl(ab,c,cab),\\
A_{\emptyset}&=\bigcup_{i=1}^4 A_i,\quad\text{et}\quad
A_s=\bigcap_{i\in s}A_i\quad\text{pour $s\subseteq\{1,2,3,4\}$.}\end{aligned}$$
Alors les $A_s$ sont des extensions finies de $\acl(\emptyset)$, et
$$\begin{array}{ll}
\sscl{a,\phi(a)}\subseteq A_{12}\subseteq\acl(a),&\sscl{b,\phi(b)}\subseteq A_{13}\subseteq\acl(b),\\
\sscl{ab,\phi(ab)}\subseteq A_{14}\subseteq\acl(ab),&\sscl{ca,\phi(ca)}\subseteq A_{23}\subseteq\acl(ca),\\
\sscl{c,\phi(c)}\subseteq A_{24}\subseteq\acl(c),&\sscl{cab,\phi(cab)}\subseteq A_{34}\subseteq\acl(cab).
\end{array}$$
Notons que $A_s=\acl(\emptyset)$ pour $|s|\ge3$, et
$$\begin{array}{ll}\dd(A_\emptyset)\ge\dim(A_{\emptyset})=3\,\dim(a),&\\
\dd(A_i)=\dim(A_i)=2\,\dim(a),&\text{pour }i\in\{1,2,3,4\}\\
\dd(A_{ij})=\dim(A_{ij})=\dim(a)&\text{pour }i\not=j,\\
\dd(A_s)=\dim(A_s)=0&\text{pour }|s|\ge3.\end{array}$$
Remarquons que :
\begin{equation*}\tag{$\star$}\begin{aligned}\dim(a) &=3\,\dim(a)-4\cdot
2\,\dim(a)+6\,\dim(a)\\
&=\sum_{s\subseteq \{1,2,3,4\} } (-1)^{|s|} \dim(A_s)
\leq \sum_{s\subset \{1,2,3,4\} } (-1)^{|s|} \dd(A_s)  \\
&=\sum_{\substack{s\subseteq
\{1,2,3,4\} \\ |s|\leq 2 }} (-1)^{|s|}\dim_1(A_s) +
\sum_{\substack{s\subseteq
\{1,2,3,4\} \\ |s|\leq 2 }} (-1)^{|s|}\dim_2(A_s),\end{aligned}
\end{equation*}
où  l'égalité finale provient de la modularité de la cardinalité.

Pour $k=1\,,\,2$, on obtient par sous-modularité :
$$\begin{aligned}\dim_k(A_{\emptyset}) &=\dim_k(A_1)+ \dim_k (A_2 A_3 A_4/A_1)\\
&\leq\dim_k(A_1)+\dim_k (A_2 A_3 A_4/A_{12} A_{13} A_{14})\\
&=\dim_k(A_1)+\dim_k(A_2A_3A_4)-\dim_k(A_{12} A_{13} A_{14})\\
&=\dim_k(A_1)+\dim_k(A_2)+\dim_k(A_3A_4/A_2)-\dim_k(A_{12} A_{13}/A_{14})\\
&\quad-\dim_k(A_{14})\\
&\le\dim_k(A_1)+\dim_k(A_2)+\dim_k(A_3A_4/A_{23}A_{24})-\dim_k(A_{12}A_{13}/A_{
14})\\
&\quad-\dim_k(A_{14})\\
&=\dim_k(A_1)+\dim_k(A_2)+\dim_k(A_3A_4)-\dim_k(A_{23}A_{24})\\
&\quad-\dim_k(A_{12} A_{13}/A_{14})-\dim_k(A_{14})\\
&=\dim_k(A_1)+\dim_k(A_2)+\dim_k(A_3)+\dim_k(A_4/A_3)-\dim_k(A_{23}A_{24})\\
&\quad-\dim_k(A_{12} A_{13}/A_{14})-\dim_k(A_{14})\\
&\le\dim_k(A_1)+\dim_k(A_2)+\dim_k(A_3)+\dim_k(A_4/A_{34})-\dim_k(A_{23}A_{24}
)\\
&\quad-\dim_k(A_{12} A_{13}/A_{14})-\dim_k(A_{14})\\
&=\dim_k(A_1)+\dim_k(A_2)+\dim_k(A_3)+\dim_k(A_4)-\dim_k(A_{34})\\
&\quad-\dim_k(A_{23}A_{24})-\dim_k(A_{12}
  A_{13}/A_{14})-\dim_k(A_{14}).\end{aligned}$$
Comme $A_{23}\ind A_{24}$ implique $A_{23}\indi k A_{24}$, nous avons 
$$\begin{aligned}\dim_k(A_{\emptyset})
  &\le\dim_k(A_1)+\dim_k(A_2)+\dim_k(A_3)+\dim_k(A_4)-\dim_k(A_{34})
  \\ &\quad-\dim_k(A_{14})-\dim_k(A_{23})-\dim_k(A_{24})-\dim_k(A_{12}
  A_{13}/A_{14}).\end{aligned}$$
En développant les séries alternées dans $(\star)$, on obtient 
$$\begin{aligned}\dim(a)&\le\sum_{k=1}^2\big(\dim_k(A_{12})+\dim_k(A_{13})
-\dim_k(A_{12}A_{13}/A_{14})\big)\\
&=\Delta_1 +\Delta_2,\end{aligned}$$
où $\Delta_k= \dim_k (A_{12} A_{13}) -\dim_k(A_{12} A_{13}/A_{14})$.
Puisque  $A_{12}\subseteq\acl(a)$, $A_{13}\subseteq\acl(b)$ et
$A_{14}\subseteq\acl(ab)$, l'indépendance $(\dag)$ donne 
$$A_{12}A_{13}\indi k_{\phi_k(ab)}A_{14}.$$
Ainsi,
$$\begin{aligned}\Delta_k&=\dim_k (A_{12} A_{13}) -\dim_k(A_{12}
A_{13}/A_{14})\\
&=\dim_k (A_{12} A_{13}) -\dim_k(A_{12} A_{13}/\phi_k(ab))\\
&=\dim_k(\phi_k(ab))-\dim_k(\phi_k(ab)/A_{12}A_{13})\\
&=\dim_k(\phi_k(ab)),\end{aligned}$$ 
car $\phi_k(ab) \in A_{14}$ est $k$-algébrique sur $\phi_k(a) \in A_{12}$ et $\phi_k(b) \in A_{13}$. D'où, $$\dim(a) \leq \Delta_1 +\Delta_2 =
\dim_1(\phi_1(ab)) +\dim_2(\phi_2(ab)) = \dim_1(\phi_1(a)) +\dim_2(\phi_2(a)).$$
Ceci entraîne par l'inégalité $(\ddag)$ que 
$$\dim(a) =\dim(\phi(a)).$$
Comme $\phi(a)$ est algébrique sur $a$, on conclut que
$\dim(a/\phi(a))=0$.

Or, l'élément $a$ est générique dans $a\cdot\ker\phi$ sur son
paramètre canonique, qui est $\phi(a)$. Comme la dimension ne
dépend pas du translaté, le noyau de $\phi$ a dimension $0$. Il est
donc fini d'après la proposition \ref{P:dim0}.\end{proof}

\begin{question} 
Peut-on aussi décrire les groupes interprétables dans la fusion
au-dessus de l'égalité~?

Existe-t-il une caractérisation analogue pour les groupes
définissables dans la fusion fortement minimale au-dessus d'un espace
vectoriel sur un corps fini~?
\end{question}

\end{document}